\title[$\cala$-decomposability of  the  Singer construction]{The 
$\cala$-decomposability of  the  Singer construction}
\author[Nguy{\~\ecircumflex}n H. V. H{\uhorn}ng]{Nguy{\~\ecircumflex}n H. V. 
H{\uhorn}ng}
\address{Department of Mathematics, Vietnam National University, Hanoi,
334 Nguy\~\ecircumflex n Tr\~{a}i Street, Hanoi, Vietnam}
\email{nhvhung@vnu.edu.vn}
\author[Geoffrey Powell]{Geoffrey Powell}
\address{Laboratoire angevin de recherches en mathématiques (LAREMA),
CNRS, Université d’Angers, Université Bretagne Loire, 
2 Bd lavoisier 49045 Angers Cedex 01, France.
}
\email{Geoffrey.Powell@math.cnrs.fr}
\keywords{Steenrod algebra -- unstable module -- destabilization -- Singer 
functor -- indecomposables}
\subjclass[2000]{Primary 55S10; Secondary 18E10}
\newtheorem{thm}{Theorem}[section]
\newtheorem{prop}[thm]{Proposition}
\newtheorem{cor}[thm]{Corollary}
\newtheorem{lem}[thm]{Lemma}
\newtheorem{Theorem}{Theorem}
\theoremstyle{definition}
\newtheorem{defn}[thm]{Definition}
\theoremstyle{remark}
\newtheorem{rem}[thm]{Remark}
\newtheorem{nota}[thm]{Notation}
\newtheorem{conj}{Conjecture}
\renewcommand\labelenumi{(\roman{enumi})}
\renewcommand\theenumi\labelenumi
\newcommand{\monom}{\mathfrak{m}}
\newcommand{\rtilde}[1][s]{\widetilde{R_{#1}}}
\newcommand{\lh}{\mathsf{l}}
\newcommand{\gl}{\mathrm{GL}}
\renewcommand{\theta}{\vartheta}
\renewcommand{\ker}{\mathrm{Ker}}
\newcommand{\rmfull}{{\mathrm{full}}}
\newcommand{\full}{\mathsf{f}}
\newcommand{\Pbar}{\overline{P}}
\newcommand{\image}{\mathrm{Im}}
\newcommand{\mil}{Q}
\newcommand{\ext}{\mathrm{Ext}}
\newcommand{\dash}{\mbox{-}}
\newcommand{\unst}{\mathscr{U}}
\newcommand{\st}{\mathrm{St}}
\renewcommand{\phi}{\varphi}
\renewcommand{\epsilon}{\varepsilon}
\newcommand{\nat}{\mathbb{Z}_{+}}    
\newcommand{\zed}{\mathbb{Z}}
\newcommand{\field}{\mathbb{F}_2}  
\newcommand{\tor}{\mathrm{Tor}}               
\newcommand{\cala}{\mathscr{A}}
\newcommand{\cald}{{\mathcal{D}}}
\begin{document}

\begin{abstract}
Let  $R_s M$ denote the Singer construction on an unstable module  $M$ over the 
Steenrod algebra $\cala$ at the prime two;
$R_sM$ is canonically a subobject of $P_s\otimes M$, where $P_s = \field [x_1, 
\ldots , x_s]$ with generators of degree one and $\field$ is the field with two elements. 
Passage to $\cala$-indecomposables gives the natural transformation $R_s M 
\rightarrow \field \otimes_{\cala} (P_s \otimes M)$, 
which identifies with      the dual of 
the composition of the Singer transfer and the Lannes-Zarati homomorphism. 

The main result of the paper proves the weak generalized algebraic 
spherical class conjecture, which was proposed by the first author. 
Namely, this morphism is  trivial on elements of positive degree when $s>2$. 
The condition $s> 2$ is necessary,  as exhibited by the spherical classes of 
Hopf invariant one and those of Kervaire invariant one.
\end{abstract}

\maketitle

\section{Introduction}

The Hurewicz map 
 \begin{eqnarray}
\label{eqn:hurewicz}
 \pi_* (\Omega^\infty  
\Sigma^\infty X) 
\rightarrow 
H_* (\Omega^\infty \Sigma^\infty X; \field)   
\end{eqnarray}
from the stable homotopy groups of a pointed space $X$ to the  homology 
 with coefficients in  $\field$ (the field with two elements) of 
the infinite loop space $QX := \Omega ^\infty \Sigma^\infty X$ is of 
significant 
interest to algebraic topologists. For 
example, for $X=S^0$ (so that $ \pi_* (Q S^0)$ gives the stable homotopy of the 
sphere spectrum),   
the spherical class conjecture predicts that the image in 
positive degree consists only of the images of elements of Hopf invariant one 
and those of Kervaire invariant one (cf. \cite{H_spherical},  \cite{Curtis}). 

The first author  (cf. \cite[Conjecture 1.1]{HT}) proposed the following 
generalization 
 (which is related to a conjecture due to Peter Eccles cited in \cite{Z}). 
Write 
$Q_0X$ for the basepoint 
component of $QX$ and henceforth always take homology (and cohomology) with 
$\field$ coefficients.     

\begin{conj}[The generalized spherical class conjecture]
\label{Conjecture_SC}
Let $X$ be a pointed CW-complex. Then the Hurewicz homomorphism $
\pi_{*}(Q_0 
X) \to H_{*}(Q_0 X)$
 vanishes on classes of $\pi_* (Q_0 X)$ of 
Adams filtration greater than $2$.
\end{conj}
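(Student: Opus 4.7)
The plan is to reduce Conjecture~\ref{Conjecture_SC} to the algebraic statement described in the abstract by passing through the mod $2$ Adams spectral sequence. For a pointed connected CW-complex $X$, this spectral sequence has $E_2^{s,t} = \ext^{s,t}_\cala (H^*(X), \field)$ and converges to $\pi_*^s(X) \cong \pi_*(Q_0 X)$ (in the relevant range), and its increasing filtration on the abutment is precisely the Adams filtration named in the conjecture.

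First I would invoke the Lannes--Zarati description of the leading term of the Hurewicz map: if $\alpha \in \pi_n(Q_0 X)$ has Adams filtration exactly $s$, represented at $E_\infty$ by a class $[\alpha] \in \ext^{s,s+n}_\cala(H^*(X), \field)$, then the Hurewicz image $H(\alpha) \in H_n(Q_0 X)$ is detected, modulo elements coming from higher Adams filtration, by $\phi_s([\alpha])$, where $\phi_s$ is the $s$-th Lannes--Zarati homomorphism. Next, I would bring in the Singer transfer $\mathrm{Tr}_s$ to present classes in $\ext^{s,*}_\cala(H^*(X), \field)$. The essential algebraic point is that the dual of the composition $\phi_s \circ \mathrm{Tr}_s$ is exactly the natural transformation $R_s M \to \field \otimes_\cala (P_s \otimes M)$ considered in the abstract, with $M = H^*(X)$. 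The paper's main theorem shows this morphism is trivial in positive degree for $s > 2$; equivalently, $\phi_s$ vanishes in positive degree on the image of the Singer transfer once $s > 2$.

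To conclude, I would appeal to (a version of) the expectation that the Singer transfer detects everything $\phi_s$ can see, so that $\phi_s = 0$ in positive degrees for all $s > 2$, and then inductively lift the vanishing of the leading term $\phi_s([\alpha])$ to the vanishing of $H(\alpha)$ itself by walking down the Adams filtration. For $\alpha$ of Adams filtration $>2$, every successive leading term encountered along this walk lives in an $\phi_{s'}$ with $s' > 2$, and thus vanishes in positive degree by the algebraic theorem.

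The main obstacle is the final step. Surjectivity of the Singer transfer modulo the kernel of $\phi_s$ is itself deep (the "weak algebraic Kahn--Priddy" type statement), and is an essential ingredient rather than a formal consequence. More seriously, $H_*(Q_0 X)$ carries no \emph{a priori} filtration compatible with the Adams filtration on $\pi_*(Q_0 X)$ — in contrast to, say, the $E_2$-page of the Adams spectral sequence for $H_*(Q_0 X)$ itself — so the induction step must be organized via the Dyer--Lashof/$E_\infty$ structure on $H_*(Q_0 X)$ and the comparison of the Adams spectral sequences for $X$ and for $\Sigma^\infty Q_0 X$, rather than via any single tautological filtration. Controlling the \emph{indeterminacy} left by "modulo higher filtration" is where most of the real geometric work would go, and this is why the present paper restricts attention to the algebraic avatar of the conjecture.
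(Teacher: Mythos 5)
The statement you were asked to prove is Conjecture~\ref{Conjecture_SC}, which the paper explicitly poses as an \emph{open conjecture} and does not prove; there is therefore no ``paper's own proof'' against which to compare. What the paper actually establishes is Theorem~\ref{main_intro}, namely the \emph{weak} generalized algebraic spherical class conjecture (Conjecture~\ref{weak_conjecture_intro}): the vanishing in positive degree of $R_sM\to\field\otimes_\cala(P_s\otimes M)$ for $s>2$, i.e.\ the vanishing of the Lannes--Zarati homomorphism $\phi_s$ \emph{on the image of the Singer transfer}. This is two steps removed from the geometric Conjecture~\ref{Conjecture_SC}, and your proposal correctly identifies both missing steps, so you should not regard it as a proof but as an accurate map of the terrain.

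To make the gaps concrete: (a) upgrading the weak algebraic result to the full algebraic Conjecture~\ref{Conjecture_Hung} (vanishing of $\phi_s$ itself in positive stems for $s>2$) requires knowing something like ``the Singer transfer detects $\operatorname{coker}\ker\phi_s$,'' which is a deep open statement, not a formal consequence; the paper only quotes partial results ($M=\field$, $s\le 5$). (b) Even granting Conjecture~\ref{Conjecture_Hung}, the Lannes--Zarati map is only an associated-graded approximation to the Hurewicz map (\ref{eqn:hurewicz}) --- the phrase ``modulo higher filtration'' hides that $H_*(Q_0X)$ carries no canonical filtration compatible with the Adams filtration on $\pi_*(Q_0X)$, so the ``walk along the filtration'' you sketch is not well-posed without substantial additional input (comparison of Adams spectral sequences for $X$ and $\Sigma^\infty Q_0X$, Dyer--Lashof structure, convergence/indeterminacy control). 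You say all of this yourself in your final paragraph, which is exactly why the paper restricts attention to the algebraic avatar. In short: your reduction scheme is the standard heuristic and is faithfully described, but it is not a proof, and the paper does not claim one.
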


The main result of this paper, Theorem \ref{main_intro},  proves Conjecture \ref{weak_conjecture_intro}, the weak algebraic version of this conjecture. To explain the latter, we first recall the Lannes-Zarati homomorphism and Singer's algebraic transfer (see the Appendix for their construction).

For $s$ a natural number, let $P_s$ denote the polynomial algebra $\field[x_1, \ldots , x_s]$ on generators of degree one, equipped with the canonical unstable algebra structure over the mod $2$ Steenrod algebra $\cala$; thus $P_s$ is isomorphic to $H^* (B \zed/2^{\oplus s})$ as an unstable algebra.

The Lannes-Zarati morphism involves the Singer functors. For each $s$, there is a  Singer functor, $R_s$, which is an exact functor defined on the category of unstable $\cala$-modules such that, for an unstable module $M$, $R_s M$ is canonically a submodule of $P_s \otimes M$ (see Section \ref{sect:background} for more detail). When $M = \tilde{H}^* (X)$, the reduced cohomology of a pointed space $X$,  the unstable modules $R_\bullet \tilde{H}^* (X)$  
arise in identifying $H^* (QX) $ in terms of $H^* (X)$ as an unstable $\cala$-module (cf. the presentation of the Singer functors in \cite{K_Whitehead} for homology).

Lannes and Zarati  \cite{LZ} constructed a
natural morphism 
\[
 \field \otimes_\cala R_s M 
 \rightarrow 
 \tor_s^\cala (\field, 
\Sigma^{-s} M).
\]
The linear dual
\begin{eqnarray}
 \label{eqn:dualLZ}
\ext_{\cala}^{s}(\Sigma^{-s} M, \mathbb{F}_2) \to (\field \otimes_\cala R_s 
M)^*, 
\end{eqnarray}
is known as the Lannes-Zarati morphism.  When $M= \tilde{H}^* (X)$, it  corresponds to an associated 
graded of the Hurewicz map (\ref{eqn:hurewicz}) (see \cite{Lannes} and  \cite{Goerss}).  

The following (which is due to the first author) 
is an algebraic version of the generalized spherical class conjecture: 

\begin{conj}(cf. \cite[Conjecture 1.2]{HT}) 
\label{Conjecture_Hung}
The  Lannes-Zarati morphism (\ref{eqn:dualLZ}) vanishes in  positive stem, 
for $s>2$ and  any unstable $\cala$-module $M$.
\end{conj}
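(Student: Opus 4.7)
The plan is to dualize and analyze the predual statement concretely. By linear duality, the Lannes--Zarati homomorphism (\ref{eqn:dualLZ}) vanishes on positive stem precisely when its predual, the natural map $\field \otimes_\cala R_s M \to \tor_s^\cala(\field, \Sigma^{-s} M)$, vanishes on positive-degree elements. Since this map is induced from the inclusion $R_s M \hookrightarrow P_s \otimes M$ by the functoriality of the construction recalled above, a sufficient condition is that the natural map $R_s M \to \field \otimes_\cala (P_s \otimes M)$ vanishes on elements of positive degree; equivalently, every positive-degree element of $R_s M$, viewed inside $P_s \otimes M$, lies in $\cala^+ \cdot (P_s \otimes M)$. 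This reformulates the conjecture as a \emph{hit-relation} statement on the Singer submodule, which is essentially the content of the weak form of the conjecture announced in the abstract.

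My strategy would then be induction on $s$, exploiting the iterated structure of the Singer construction: up to natural identifications, $R_s$ factors through iterated application of $R_1$, compatibly with the inclusion into $P_1 \otimes \cdots \otimes P_1 \otimes M$. Since $R_1$ is exact and linear, hit-relations for elements of $R_{s-1} M$ ought to propagate under one further application of $R_1$, provided one can control the interaction of Steenrod operations with the Singer structure map of $R_1$. The base case must be $s=3$, since the conclusion genuinely fails at $s=1$ (Hopf invariant one) and $s=2$ (Kervaire invariant one); the transition should be detectable via modular invariant theory, where the Dickson algebra $P_s^{\gl_s(\field)}$ first becomes rich enough in Steenrod-expressible classes for $s \geq 3$ to hit all positive-degree Singer elements.

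The main obstacle is controlling hit-relations inside $R_s M$ without confronting Peterson's hit problem for $P_s$ in its full generality. One has to exploit the very specific structure of the Singer submodule---for example, its Dickson-invariant/localization description, or its characterization via Lannes--Zarati's identification $\cald_s(\Sigma^{1-s}M) \cong \Sigma R_s M$---to extract enough explicit hit relations, rather than attempting to solve the hit problem on the ambient $P_s \otimes M$. A second layer of difficulty is that the conjecture is uniform in the unstable module $M$, which forces the argument to be functorial: the hit-relations must be produced by natural transformations rather than by case-by-case computations. Combining a functorial description of $R_s$ with an invariant-theoretic analysis of which positive-degree classes become hit once $s \geq 3$ is, I expect, where the bulk of the technical work has to be done.
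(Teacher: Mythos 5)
There is a fundamental logical gap at the very first step of your reduction, and it is precisely the gap that separates the statement you were asked to prove from what the paper actually establishes. You assert that the vanishing of the natural map $R_s M \to \field \otimes_\cala(P_s \otimes M)$ on positive-degree elements is a \emph{sufficient} condition for the vanishing of the predual of the Lannes--Zarati homomorphism $\field \otimes_\cala R_s M \to \tor_s^\cala(\field, \Sigma^{-s}M)$. The implication runs the other way. Tracing through the paper's identifications, the map $R_s M \to \field \otimes_\cala(P_s \otimes M)$ is (up to suspension) the \emph{composite} of the predual of the Lannes--Zarati homomorphism with $\psi_s^{\Sigma M}$, the predual of the Singer transfer. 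Vanishing of a composite $A \to B \to C$ tells you nothing about $A \to B$ unless $B \to C$ is injective; here there is no reason for $\psi_s$ to be injective. Dually, what the hit-relation statement gives is that the Lannes--Zarati homomorphism kills the \emph{image} of the Singer transfer inside $\ext^s_\cala(\Sigma^{-s}M,\field)$, and it says nothing about the rest of $\ext^s_\cala$.

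This is exactly the distinction the paper makes between Conjecture~\ref{Conjecture_Hung} and its weak form, Conjecture~\ref{weak_conjecture_intro}: the paper explicitly states that ``a weak form of Conjecture~\ref{Conjecture_Hung} asserts that the $s$-th Lannes--Zarati homomorphism vanishes \ldots\ on the image of the Singer transfer,'' and that Theorem~\ref{main_intro} proves this weak form. The full Conjecture~\ref{Conjecture_Hung} is \emph{not} a result of the paper; it remains open in general and is established only for $M = \field$ at $s = 3, 4, 5$ by the separate works cited in the introduction. Your reformulation and the subsequent sketch (reduction to $s=3$, use of the Dickson-invariant structure, exploiting the Singer submodule rather than the full ambient hit problem) is a reasonable outline for the \emph{weak} conjecture, and broadly resembles the paper's strategy of reducing to $s=3$ and then running a fullness/length induction with the Milnor operations $Q_0, Q_1$; but it cannot yield the full conjecture, and a correct treatment of the statement given must either acknowledge it is open or supply an argument that does not factor through the Singer transfer.
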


\begin{rem}     
This conjecture is hard. Current approaches rely heavily upon knowledge of the cohomology of the Steenrod algebra. It has been proved  for $M=\field$ for  $s=3,4$, and $5$ respectively in  
\cite{H_spherical} and \cite{Hung1999}, \cite{Hung2003}, and \cite{HQT}. 
That the Lannes-Zarati morphism for $M=\field$  vanishes for $s>2$ on the algebra decomposables of $\ext_{\cala}^s(\field,\field)$  was  proved  in  \cite{HP}.
   
Conjecture \ref{weak_conjecture_intro} below represents a first step, having the advantage that its formulation does not involve the cohomology of the Steenrod algebra.
\end{rem}
  
Singer's algebraic transfer  gives an algebraic approximation to the iterated $\zed/2$-transfer of stable homotopy theory \cite{S}. For an $\cala$-module $N$, it is the dual of a natural map
\[
 \psi_s : \tor_s^\cala (\field, \Sigma^{-s} N)
\rightarrow 
\field \otimes_\cala (P_s \otimes N).
\]

Taking $N$ to be an unstable module $M$, the dual of the composition of the Singer algebraic transfer with the Lannes-Zarati morphism gives a natural transformation
\[
R_s M 
\rightarrow 
\field \otimes_\cala (P_s \otimes M). 
\]

By Proposition \ref{prop:identify_LZ_Singer}, 
this identifies with the composite of the natural inclusion $R_s M \hookrightarrow P_s\otimes M$ with the passage to $\cala$-invariants. Hence the following (which is due to the first author) 
is a  weak form of Conjecture~\ref{Conjecture_Hung} (cf. \cite[Conjecture 1.5]{HT}):  
   
\begin{conj} 
[The weak generalized algebraic spherical class conjecture] 
\label{weak_conjecture_intro}
Let $M$ be an unstable $\cala$-module and $s>2$ be an integer. Every 
positive degree element of 
the Singer construction $R_sM$ is $\cala$-decomposable in 
$P_s\otimes M$. 
\end{conj}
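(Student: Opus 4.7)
The statement is equivalent to showing that for $s > 2$ the natural map $R_s M \to \field \otimes_\cala (P_s \otimes M)$ vanishes in positive degree, i.e., every positive-degree element of $R_s M$ lies in $\cala^+ \cdot (P_s \otimes M)$. I would approach this by induction on $s$, exploiting the iterative structure of the Singer functor and treating $s = 3$ as the essential base case.

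First, using the exactness of $R_s$ and of $P_s \otimes (-)$, the naturality of the transformation in $M$, and compatibility with filtered colimits, I would reduce to checking the statement on a convenient generating class of unstable modules, for instance the free unstable modules $F(n)$ or the suspensions $\Sigma^n \field$. Then, for the inductive step $s \geq 4$, I would exploit the compositional structure relating $R_s$ to $R_1$ and $R_{s-1}$, reinterpreting the inclusion $R_s M \subset P_s \otimes M$ through an iterated embedding into $P_1 \otimes P_{s-1} \otimes M \cong P_s \otimes M$. By the inductive hypothesis every positive-degree element of $R_{s-1} M$ is decomposable in $P_{s-1} \otimes M$; one must then verify that tensoring with $P_1$ and passing to the image under $R_1$ preserves this decomposability in the ambient tensor product.

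The heart of the argument is the base case $s = 3$. Here I would use Singer's invariant-theoretic presentation of $R_3 M$, which identifies its elements with combinations of $\gl_3(\field)$-type invariants in $P_3$ (Dickson invariants and related classes) tensored with elements of $M$. On each explicit $\cala$-module generator in positive degree, one then verifies directly that it admits an expression $\sum Sq^{i_j} y_j$ with $y_j \in P_3 \otimes M$ of strictly lower degree.

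\emph{The main obstacle} is this explicit Steenrod-algebra calculation together with the conceptual question of why decomposability must fail for $s \leq 2$ but hold from $s = 3$ on. The Hopf-invariant-one classes ($s = 1$) and Kervaire-invariant-one classes ($s = 2$) are precisely the obstructions at the two lowest Singer levels, and the argument must exhibit the mechanism by which the instability and excess constraints that protect those classes cease to operate starting at $s = 3$. I expect the crux to be a careful bookkeeping with Milnor-basis expansions, or a judicious choice of $\cala$-generators for $R_3 M$ that manifestly displays each one as a Steenrod image in $P_3 \otimes M$.
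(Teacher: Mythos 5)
Your reduction to the free unstable modules $F(n)$ is correct and matches the paper's Lemma~\ref{lem:F(n)-reduction}, and your intuition that the base case $s=3$ requires explicit Steenrod/Milnor-operation bookkeeping is also on target. However, your inductive step from $s-1$ to $s$ contains a genuine gap.

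You propose factoring $R_s$ through $R_1 R_{s-1}$ and then arguing that the decomposability of $(R_{s-1}M)^{>0}$ (inductive hypothesis) is ``preserved'' by tensoring with $P_1$ and passing through $R_1$. This cannot work: positive-degree elements of $R_1 N$ (with $N = R_{s-1}M$) include $\st_1$ applied to \emph{degree-zero} elements of $N$ and their $D_1$-multiples, about which the inductive hypothesis on $N^{>0}$ says nothing. More fundamentally, the conclusion is simply false for $R_1$; for instance $R_1\field = D_1 = \field[u]$ and $u$ is not $\cala$-decomposable in $P_1$ (this is precisely a Hopf-invariant-one class). So there is no way to run an induction whose step involves applying $R_1$. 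The paper's Proposition~\ref{prop:reduction_s=3} sidesteps this by jumping directly: it uses the inclusion $R_s M \hookrightarrow R_3(R_{s-3}M)$ and applies the $s=3$ result to the \emph{arbitrary} unstable module $R_{s-3}M$ (this is why it is essential to prove $s=3$ for all unstable modules, not just free ones or $\Sigma^n\field$). Your plan needs to replace the $R_1\circ R_{s-1}$ factorization with the $R_3\circ R_{s-3}$ one.

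For the base case $s=3$, your sketch (pick $\cala$-generators of $R_3 M$, verify each is a Steenrod image) is too coarse: $R_3 F(n)$ is not a finitely generated $\cala$-module, and the proof does not proceed generator-by-generator. The paper's actual strategy is structural: it splits $R_3 M$ as a graded vector space into $\st_3 M \oplus \rtilde[3] M$, kills $\st_3 M^{>0}$ using the fact that $Q_0$ and $Q_1$ act trivially on $\st_3$-images (so elements in $\ker Q_0 + \ker Q_1$ inside $\Pbar_3 \otimes \st_3 M$ are automatically $\cala(1)$-decomposable), and then handles $\rtilde[3] F(n)$ by a double induction on ``fullness'' of the $D_3$-monomial factor and ``length'' of the $\st_3$-factor, using the key computational input that $c_{3,2}c_{3,1} \in \image Sq^1 + \image Sq^2$. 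None of these ingredients appear in your sketch, so the base case remains entirely open.
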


The  main result of the paper proves this conjecture:

\begin{Theorem}\label{main_intro}
If $s>2$, the morphism  
\[
R_s M \rightarrow \field \otimes_{\cala} (P_s \otimes M) 
\]
is trivial  on elements of positive degree, for 
any unstable $\cala$-module $M$.
\end{Theorem}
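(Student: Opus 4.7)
The plan is to reduce the claim to a structurally simpler family of unstable modules via functoriality, then to verify the required $\cala$-decomposability by direct analysis of the Singer construction, using the hypothesis $s>2$ at the decisive step.

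First I would reduce to the free unstable modules $F(n)$ (characterized by $\hom_{\unst}(F(n),N) = N_n$). Any unstable module $M$ admits a surjection $F \twoheadrightarrow M$ from a sum $F = \bigoplus_i F(n_i)$ of such modules. Since the Singer functor $R_s$ is exact, this induces a surjection $R_s F \twoheadrightarrow R_s M$; combined with naturality of the transformation $R_s(-) \to \field \otimes_\cala (P_s \otimes -)$, any positive-degree element $\xi \in R_s M$ lifts to a positive-degree $\tilde\xi \in R_s F$, and vanishing of the image of $\tilde\xi$ in $\field \otimes_\cala(P_s \otimes F)$ forces vanishing of the image of $\xi$. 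By additivity of the functors involved, this reduces the problem to the single-generator case $M=F(n)$.

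Next I would exploit the explicit description of $R_s F(n) \subset P_s \otimes F(n)$. Using the canonical realization of the Singer functor --- either as a subobject of $\widehat{P}_s \otimes F(n)$ cut out by invariance conditions, or via an iterated construction building $R_s$ out of $R_1$ --- one identifies a distinguished generating set for $R_s F(n)$ as a sub-$\cala$-module, consisting of products of Mùi/Dickson-type elements in $P_s$ with admissible monomials on the generator $\iota_n$. By $\cala$-equivariance of the projection to $\field \otimes_\cala(P_s \otimes F(n))$, it then suffices to exhibit each such generator, in positive degree, as an element of $\cala^+\cdot(P_s \otimes F(n))$.

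The main obstacle, and the point at which $s>2$ is genuinely used, is the explicit Steenrod bookkeeping that produces, for each generator, a witness $y \in P_s \otimes F(n)$ and a Steenrod operation $\mathrm{Sq}^{2^k}$ with $\mathrm{Sq}^{2^k}(y)$ equal to that generator modulo lower decomposables. The hypothesis is sharp, since for $s=1,2$ certain top elements detect Hopf- and Kervaire-invariant-one classes and are not $\cala$-decomposable; any successful argument must therefore use the extra polynomial variable available when $s>2$. I would attack this by induction on $s$ via an iterated-Singer factorization that peels off one variable, combined with excess and degree bookkeeping to isolate a base case (plausibly $s=3$) that can be treated directly using instability relations in $\cala$ and the linear-group symmetry built into $R_s$. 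The combinatorial subtlety of ensuring that the required Steenrod witnesses exist precisely when $s>2$ is what I expect to be the core technical difficulty.
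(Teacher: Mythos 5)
Your reductions are sound and essentially match the paper's: replacing a general unstable $M$ by the projective generators $F(n)$ via exactness and additivity of $R_s$ (the paper's Lemma \ref{lem:F(n)-reduction}), and reducing to $s=3$ via the factorization $R_s \hookrightarrow R_3 R_{s-3}$ (the paper's Proposition \ref{prop:reduction_s=3}). Those observations are correct but standard; they are not where the difficulty lies.

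The genuine gap is that you stop precisely at the point where the proof begins. You write that the core step is ``the explicit Steenrod bookkeeping that produces, for each generator, a witness $y$ and an operation $\mathrm{Sq}^{2^k}$ with $\mathrm{Sq}^{2^k}(y)$ equal to that generator modulo lower decomposables,'' and then describe this as ``what I expect to be the core technical difficulty'' --- that is, you announce the hard part rather than proving it. The paper's argument at this point has real content and structure that your sketch does not anticipate. It splits $R_3 M$ as a graded vector space into $\widetilde{R_3}M \oplus \st_3 M$ (Lemma \ref{lem:rtilde:properties}) and handles the two pieces by completely different means. For $\st_3 M^{>0}$, the decisive tool is the Milnor primitives $Q_0,Q_1$: $\st_3$ of anything lies in $\ker Q_0 \cap \ker Q_1$ (Lemma \ref{lem:mil_trivial}), and a Margolis-cohomology computation on $\Pbar_3$ (Lemma \ref{lem:Milnor_observation}) together with the explicit $\cala(0)$-module structure of $D_3$ shows every positive-degree element of $\st_3 M$ is in $\image Sq^1 + \image Sq^{4|x|}$ (Proposition \ref{prop:st3_hit}). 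For $\widetilde{R_3}F(n)$, the paper runs a nested induction on the ``fullness'' of the Dickson factor $c^I$ in each basis element $c^I\,\st_3 y$ and on the length of $\st_3 y$ in $\field[V(1),\ldots,V(n)]$, anchored on the fact that $c_{3,2}c_{3,1}\in \image Sq^1 + \image Sq^2$ (Lemma \ref{lem:B}, from \cite{HN}) and driven by Lemmas \ref{lem:exclude} and \ref{lem:parity_case}. None of this machinery --- the $\widetilde{R_3}\oplus\st_3$ splitting, the Milnor/Margolis argument, the fullness and length invariants --- is present in your proposal, and without it there is no proof: the reductions alone do not make the $\cala$-decomposability of $R_3 F(n)^{>0}$ any more tractable than that of $R_s M^{>0}$ in general.
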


\begin{rem}
\label{rem:known}
\ 
\begin{enumerate}
\item
The condition $s > 2$ is necessary: for $M = \field$, the map is 
non-zero for $s\in \{1, 2 \}$ (see \cite{H_spherical}). 
This phenomenon arises from the existence of the spherical classes of Hopf invariant one and those of Kervaire invariant one.

 \item 
Taking $M=\field$ recovers the main result of \cite{HN}, since $R_s \field$ is isomorphic to the rank $s$ Dickson algebra $D_s$ (see Section \ref{sect:background}). 
\item 
Taking $M= P_k$, for  $k$ a natural number,  recovers the main result of \cite{HN2}.
\end{enumerate}
\end{rem}

It is an important fact that the Singer functor $R_s$ takes values in  the category of unstable modules equipped with a compatible module structure over the Dickson algebra $D_s$. This allows the Singer construction on an unstable module $M$ to be written as the direct sum of $\field$-vector spaces
\[
R_s M \cong \rtilde M \oplus \st_s M
\]
where $\rtilde M \subset R_s M$  is the submodule of $D_s$-decomposables and $\st_s M \subset R_s M$ is  an explicit space of generators (see Lemma \ref{lem:rtilde:properties}). The proof of Theorem \ref{main_intro} treats the two summands separately (see Theorem \ref{thm:refined}). 

Moreover, the general formulation of Conjecture \ref{weak_conjecture_intro} allows the following reduction strategy to be applied:
\begin{enumerate}
\item 
Proposition \ref{prop:reduction_s=3} reduces to the case $s=3$, by standard properties of the Singer functors. 
\item 
Lemma \ref{lem:F(n)-reduction} reduces to studying the cases  of the free unstable modules $M= F(n)$, since these generate the category of unstable modules.
\item 
The standard embedding of $F(n)$ in $P_n$ induces an inclusion $R_3 F(n) \subset R_3 P_n$. This allows the known structure of $R_3 P_n$ as an unstable algebra over the Steenrod algebra (see  Proposition \ref{prop:invt_ring_Singer}) to be exploited.
\end{enumerate}

The proof of Theorem \ref{main_intro} builds upon some of the methods of 
Tr{\`\acircumflex}n N. Nam and the first author \cite{HN,HN2}. However, an important new ingredient is the usage of the Milnor operations $Q_0$, $Q_1$, a key element of the inductive strategy used in Section 
\ref{sect:proofs}. 

\begin{rem}
By using the reduction strategy here and the Milnor operations, it is possible to give a  short proof of the case $M= \field$, the main result of \cite{HN}.
\end{rem}

For the submodule $\rtilde[3] F(n) \subset R_3 F(n)$, the result proved gives information on the Steenrod 
operations which are required to hit elements (see Theorem \ref{thm:hyper_refined}). In particular, even in the cases treated previously in \cite{HN,HN2}, the result proved is stronger. Crucially, this additional precision is used in the inductive proof.

\bigskip

\noindent
{\bf Organization of the paper:} Background and references are provided in 
Section \ref{sect:background} and 
the results are stated in Section \ref{sect:results},  where the 
reduction arguments are explained. The first 
result, concerning the image of the Steenrod total power, is proved 
in Section \ref{sect:image_sts}. The 
 precise form of the main result is stated and proved in Section \ref{sect:proofs}. Finally, the Appendix explains why the composite of the Singer algebraic transfer and the Lannes-Zarati morphism identifies as above.

\section{Background}
\label{sect:background}

Fix $\field$ the field with two elements and write $\cala$ for the 
mod $2$ Steenrod algebra. 
Let $\cala(n)$ denote the subalgebra of $\cala$ 
generated by $\{Sq^1, \ldots , Sq^{2^n} \}$
for $n \in \nat$, where $\nat$ denotes the set of non-negative integers. 
Recall that 
 $\Phi $ denotes the doubling functor on the  category $\unst$ of unstable 
modules. (Throughout  \cite[Chapter 1]{schwartz_book} serves as our reference for the basic theory of unstable modules and unstable algebras over the Steenrod algebra.)

Let $P_s=\field[x_1,\dots,x_s]$ denote the polynomial algebra on $s$ generators 
of degree $1$, equipped with the usual actions of the  
general linear group $GL_s=GL(s,\field)$ and of the Steenrod algebra. The rank 
$s$ Dickson algebra, 
$D_s$, is the algebra of invariants
$ 
\field[x_1,\dots,x_s]^{GL_s},
$
which is an unstable $\cala$-algebra. 
The underlying algebra is polynomial
$
D_s \cong \field[c_{s,0}, \dots, c_{s,s-1}],
$ 
where $c_{s,i}$ denotes the Dickson invariant of degree $2^s-2^i$ 
(for the action of the Steenrod algebra, see \cite{W}, \cite{H}).

The  Singer functor (as used by 
Lannes and Zarati 
\cite{LZ}) is an exact functor 
\[
 R_s : \unst \rightarrow D_s \dash \unst \stackrel{\mathrm{forget}}{\longrightarrow}
\unst
\]
from unstable modules to the category $D_s\dash\unst$ of $D_s$-modules in
unstable modules or, by forgetting the $D_s$-action, to unstable modules (see \cite{LZ,Psinger}).

\begin{prop}\cite{LZ,P_viasm}
\label{prop:Singer_properties}
For $s \in \nat$  and $M$ an unstable module:
\begin{enumerate}
\item 
There are natural inclusions in $D_s\dash\unst$:
\[
R_s M 
\hookrightarrow 
D_s \otimes M 
\hookrightarrow 
P_s \otimes M. 
\]
\item 
For $t \in \nat$, these canonical inclusions induce a commutative diagram of inclusions of unstable modules
\[
\xymatrix{
R_{s+t} M 
\ar@{^(->}[r]
\ar@{^(->}[d]
&
R_s R_t M 
\ar@{^(->}[d]
\\
P_{s+t} \otimes M 
&
R_s (P_t \otimes M), 
\ar@{_(->}[l]
}
\] 
using the isomorphism $P_s \otimes P_t \cong P_{s+t}$.
\item 
There is a natural isomorphism $ \field \otimes_{D_s} R_s M \cong \Phi^s M  $ in $\unst$
and hence a canonical projection
\begin{eqnarray}
\label{eqn:proj_Phi}
 R_s M \twoheadrightarrow \Phi^s M.
\end{eqnarray}
\item
The underlying $D_s$-module of $R_sM$ is free on  $\st_s M$, the image of the $s$-iterated total Steenrod power. 
\end{enumerate}
\end{prop}

\begin{proof}
(Indications.)
By definition, $R_s M$ is  generated as a $D_s$-submodule of $P_s \otimes M$ by the image of
$\st_s : \Phi^s M \dashrightarrow P_s \otimes M$, the total Steenrod power map
\cite{LZ} (the dashed arrow indicates that this is linear but not $\cala$-linear
in general and the iterated double $\Phi^s$ ensures that the degree is 
preserved). The total Steenrod power is defined recursively using an 
isomorphism 
$P_s \cong (P_1)^{\otimes s}$ of unstable algebras, starting from 
$\st_1 (x) = \sum u^{|x|-i} \otimes Sq^i x \in P_1 \otimes M$ and setting 
$\st_s 
= \st_{1} \circ \st_{s-1}$, interpreted
in the appropriate manner. 

The map $\st_s$ takes values in $D_s \otimes M$, hence $\st_s$ is independent of the isomorphism $P_s \cong (P_1)^{\otimes s}$ used in its construction and 
$R_s M$ is contained in $D_s \otimes M$. The total Steenrod power is an $\field$-linear section of the projection $R_s M \twoheadrightarrow \Phi^s M$. That $R_s M $ is a free $D_s$-module on $\st_s M $ is a standard verification. 

From the construction, it is straightforward to see that $R_s R_t M$ is a submodule of $R_{s+t}M$  and that this is compatible with the inclusions. 
\end{proof}

If $K$ is an unstable algebra, then $R_s K$ is a sub unstable algebra of $P_s \otimes K$, containing $D_s$ as a sub unstable algebra (assuming $K$ unital). For example, in the case of the polynomial algebra $P_k$:

\begin{prop}
\cite[Theorem 1.2]{HN2}
\label{prop:invt_ring_Singer}
 For positive integers $k, s$, the unstable algebra $R_s P_k$ is isomorphic to 
the algebra of invariants 
 \[
  R_s P_k \cong P_{k+s}^{ \gl_s \bullet \mathbf{1}_k}
 \]
where $\gl_s \bullet  \mathbf{1}_k \subset \gl_{k+s}$ is the subgroup of matrices of the form 
$\left(
\begin{array}{ll}
A & * \\
0 & \mathrm{Id}_k
\end{array}
\right)$
with $A \in \gl_s$ and $\mathrm{Id}_k \in \gl_k$ the identity.
 
Moreover, there is an isomorphism of algebras 
\begin{eqnarray}
\label{eqn:RsPk}
 R_s P_k \cong \field [c_{s,i} , V_s(j) ~|~ 0\leq i < s, \  1 \leq j \leq k]
\end{eqnarray}
where $V_s (j):= \mathrm{St}_s x_j$, in particular $|V_s (j)|= 2^s$.
\end{prop}

\begin{rem}
\ 
\begin{enumerate}
\item 
This result is implicit in  \cite[Section 5.4.7]{LZ}, although the 
 underlying algebra of $R_s P_k$ is not identified explicitly.
  \item
  As an element of $P_{k+s}$, $V_s (j)$ is the Mui invariant
  $
   \prod_{y \in \field^s} (x_j + y) 
  $, 
where $y$ ranges over elements of $\field^s \subset \field^{k+s}$. 
\end{enumerate}
\end{rem}

For the proof of the  main result, it is sufficient to consider the case
$s=3$, hence the following notation is adopted:

\begin{nota}
Write $V(j)$ for $V_3(j) \in R_3 P_k$ for $1 \leq j \leq k$.
\end{nota}

Since $R_3P_k$ is a polynomial algebra on specified generators 
(\ref{eqn:RsPk}), 
it is equipped 
with a length grading:

\begin{nota}
 \label{nota:length}
Write $\lh (\monom)$ for the length of a monomial $\monom$ in  $\{ c_{s,i}
, V(j) ~|~ 0\leq i < s, \  1 \leq j \leq k \}$ (that is the length of $\monom$ 
as a
word). 
\end{nota}

\begin{defn}
\label{def:length} 
A non-zero element of $R_3 P_k$ is {\em length homogeneous} if it is the sum
of monomials $\sum_i \monom_i $ of the same length; its length is $\lh 
(\monom_i)$ for any monomial appearing.  
\end{defn}

\begin{lem}
\label{lem:length_homog_V}
A non-zero element of the sub-algebra $\field [V(j)| 1 \leq j \leq k] \subset R_3 P_k$ is length homogeneous if and only if it is concentrated in a single degree.  
\end{lem}

\begin{proof}
The generators $V(j)$ each have degree $8$. Hence, the degree of a monomial $\monom \in \field [V(j)| 1 \leq j \leq k] $ is $|\monom |= 8 \lh (\monom)$. The result follows immediately.
\end{proof}

Recall that the first two Milnor operations are $Q_0 = Sq^1$ and $Q_1= [Sq^2, 
Sq^1]$.

\begin{prop}
\label{prop:Sq_invts}
\cite{H}
The $\cala$-action on 
$R_3 P_k$ is determined by
\[
 \begin{array}{|l|l|l|l|l|l|l|l|l|}
  \hline
  Sq^0&Sq^1&Sq^2&Sq^3&Sq^4&Sq^5&Sq^6&Sq^7&Sq^8\\
  \hline\hline
 c_{3,2} & &c_{3,1}& c_{3,0} & c_{3,2}^2 &&&&\\ 
 \hline
 c_{3,1} & c_{3,0} & && c_{3,1}c_{3,2} & c_{3,0}c_{3,2} & c_{3,1}^2 &&\\
 \hline
 c_{3,0}&& &&c_{3,0}c_{3,2} & & c_{3,0}c_{3,1}& c_{3,0}^2 &\\
\hline
V(j) &&&&V(j)c_{3,2} & & V(j)c_{3,1} & V(j) c_{3,0} & V(j)^2\\
\hline
 \end{array}
\]
In particular, the Milnor operations $\mil_0, \mil_1$ act trivially on $V(j)$
and $c_{3,0}$, $Q_0 c_{3,2} = Q_1 c_{3,2}= 0$ whereas 
$Q_1 c_{3,2} =  Q_0 c_{3,1}= c_{3,0}$.
\end{prop}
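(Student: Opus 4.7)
The proposition is a purely computational verification of the Steenrod action on the explicit generators of the invariant ring $R_3 P_k$, and the strategy breaks naturally into instability reductions, a Dickson computation, and a computation for $V(j)$.

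My first step is to extract the easy entries from instability. Since $|c_{3,0}|=7$, $|c_{3,1}|=6$, $|c_{3,2}|=4$ and $|V(j)|=8$, the instability axiom ($Sq^i x = 0$ for $i > |x|$ and $Sq^{|x|} x = x^2$) immediately gives all the ``top squares'' in the table (namely $Sq^4 c_{3,2} = c_{3,2}^2$, $Sq^6 c_{3,1} = c_{3,1}^2$, $Sq^7 c_{3,0} = c_{3,0}^2$, $Sq^8 V(j) = V(j)^2$) together with vanishing in any larger degree. This also yields $Sq^0 = \mathrm{id}$ throughout.

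The Dickson entries (the rows for $c_{3,0}$, $c_{3,1}$, $c_{3,2}$) are the classical formulas of Wilkerson/Mùi/Hưng \cite{W,H}. The cleanest derivation is through Dickson's identity
\[
\prod_{y \in \field^3}(X + y) \;=\; X^8 + c_{3,2} X^4 + c_{3,1} X^2 + c_{3,0} X
\]
in $P_{k+3}[X]$: one applies the total Steenrod operation $Sq = \sum_{i\geq 0} Sq^i$ (which is a ring homomorphism), uses $Sq(X + y) = (X+y) + (X+y)^2$ on each factor since $y$ has degree one, and compares coefficients of the two expressions for $Sq$ of the product. This gives a closed system for $Sq(c_{3,i})$ that is routine to solve and reproduces the first three rows of the table.

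For $V(j)$, I would exploit the same product formula evaluated at $X = x_j$: in $P_{k+3}$ one has
\[
V(j) \;=\; \prod_{y \in \field^3}(x_j + y) \;=\; x_j^8 + c_{3,2} x_j^4 + c_{3,1} x_j^2 + c_{3,0} x_j.
\]
Because $Sq(x_j) = x_j + x_j^2$, the total square applied to each Frobenius power $x_j^{2^a}$ is again a sum of two such powers, and combining this with the Cartan formula and the already computed $Sq(c_{3,i})$ lets one compute $Sq^i V(j)$ by expanding and collecting. The entries in the last row of the table fall out, each being a Dickson multiple of $V(j)$. Finally the Milnor statements follow by reading $Q_0 = Sq^1$ off the $Sq^1$ column and computing $Q_1 = Sq^2 Sq^1 + Sq^1 Sq^2$ on each generator from the first three columns.

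The main obstacle is the bookkeeping in the third step: one must verify that, after Cartan expansion, the a priori complicated mix of $x_j$-monomials and $c_{3,i}$-terms always reassembles into $V(j)$ times a single Dickson invariant, with many cross-terms cancelling in characteristic two. Using the compact generating-function identity for $V(j)$ above (rather than Cartan term-by-term) makes this collapse transparent, since each $Sq^i V(j)$ is forced to be $\gl_{3,k}$-invariant and of the right bidegree in $(x_j, \text{Dickson})$, leaving only finitely many candidate expressions.
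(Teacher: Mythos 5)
The paper offers no proof of this proposition: it is stated with a bare citation to \cite{H} (and the Dickson rows are also in \cite{W}), so there is no in-text argument to compare against. Your reconstruction is correct and is essentially the classical derivation used in those references: read off the top squares and the vanishing above the top degree from instability; obtain the Dickson rows by applying the total Steenrod square to the master identity $\omega(X)=\prod_{y\in\field^3}(X+y)=X^8+c_{3,2}X^4+c_{3,1}X^2+c_{3,0}X$, using $Sq(X+y)=(X+y)+(X+y)^2$ on the left and Cartan on the right, then matching bidegrees; and obtain the $V(j)$ row from $V(j)=\omega(x_j)$ together with $Sq(x_j^{2^a})=x_j^{2^a}+x_j^{2^{a+1}}$ and the already-computed $Sq(c_{3,i})$, after which the expression reassembles as a Dickson multiple of $V(j)$. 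The Milnor statements then follow by $Q_0=Sq^1$ and $Q_1=Sq^2Sq^1+Sq^1Sq^2$ applied column-by-column. (Incidentally, as printed the proposition contains a typo in the last sentence — it writes $Q_1 c_{3,2}$ twice with two different values; the first occurrence should read $Q_1 c_{3,1}=0$, which your computation confirms.)
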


The Milnor operation $\mil_i \in \cala$ satisfies  $\mil_i^2=0$ and 
the  Margolis cohomology 
groups of an unstable module $M$ are defined as 
$$H^* (M; \mil_i)
:= 
\ker \mil_i / \image \mil_i 
$$ 
with grading inherited from $M$.  Moreover, since $\mil_i \in \cala$ is 
primitive
 with respect to the Hopf algebra structure, it acts as a derivation on 
unstable 
algebras.

\begin{nota}
 Let  $\Pbar_k$ denotes the augmentation ideal of
$P_k$.
\end{nota}

\begin{lem}
\label{lem:Milnor_observation}
For $i, k \in \nat$,  
\begin{enumerate}
 \item 
 \label{item:MO_1}
 $\Pbar_k$ is $\mil_0$-acyclic, that is $\ker\mil_0 = \image \mil_0$ on 
$\Pbar_k$; 
 \item 
  \label{item:MO_2}
$\Phi P_k \subset P_k$ lies in $\ker \mil_i$ 
and induces a
surjection 
$$
\Phi P_k\cong 
\field [x_j^2 ~|~ 1 \leq j \leq k]  \twoheadrightarrow H^* (P_k; \mil_i) 
\cong 
\field [x_j^2 ~|~ 1 \leq j \leq k]/ (x_j^{2^{i+1}});
$$
\item  
\label{item:MO_3}
$\ker (\mil_i|_{\Pbar_k})= \Phi \Pbar_k + \image
\mil_i \subset 
\image \mil_0 + \image \mil_i.
$
\end{enumerate}
In particular 
$$
\ker (\mil_0|_{\Pbar_k}) + \ker (\mil_1|_{\Pbar_k}) \subset \big( \image Sq^1 + 
\image
Sq^2 \big) = \overline{\cala (1)} \ \Pbar_k, 
$$ 
where $\overline{\cala (1)}$ is the augmentation ideal of $\cala(1)$.
\end{lem}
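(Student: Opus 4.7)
The plan is to exploit the fact that each Milnor operation $\mil_i$ is primitive in the Hopf algebra $\cala$, and therefore acts on the unstable algebra $P_k$ as a derivation, with $\mil_i(x_j) = x_j^{2^{i+1}}$. Since $\mil_i^2 = 0$ and $\mil_i$ is primitive, the complex $(P_k, \mil_i)$ is the $k$-fold tensor product of $(P_1, \mil_i)$ with itself, so the Künneth formula over the field $\field$ yields
\[
H^*(P_k; \mil_i) \cong H^*(P_1; \mil_i)^{\otimes k}.
\]
This reduces everything to the rank one computation, where the derivation property gives $\mil_i(x^n) = n\, x^{n + 2^{i+1} - 1}$: for $i = 0$ this yields $H^*(P_1; \mil_0) = \field$ concentrated in degree zero, and for $i \geq 1$ it yields $H^*(P_1; \mil_i) \cong \field[x^2]/(x^{2^{i+1}})$, spanned by the even powers $x^{2m}$, $0 \leq m < 2^i$.

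Part (i) follows at once, since the Künneth formula shows $H^*(P_k; \mil_0) = \field$ concentrated in degree zero, whence $\Pbar_k$ is $\mil_0$-acyclic. For (ii), the inclusion $\Phi P_k \subset \ker \mil_i$ is automatic from the derivation property ($\mil_i$ kills squares in characteristic two), and the rank one cohomology classes $x^{2m}$ already lie in $\Phi P_1$; the Künneth isomorphism then identifies $H^*(P_k; \mil_i)$ with the claimed quotient of $\Phi P_k \cong \field[x_j^2 ~|~ 1 \leq j \leq k]$, with the surjection induced by the inclusion of squares.

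For (iii), any $z \in \ker(\mil_i|_{\Pbar_k})$ can be written, by (ii), as $z = \phi + \mil_i(y)$ with $\phi \in \Phi P_k$ and $y \in P_k$; comparing augmentations forces the degree zero component of $\phi$ to vanish, so $\phi \in \Phi \Pbar_k$. The further containment $\Phi \Pbar_k \subset \image \mil_0$ follows from (i), since $\Phi \Pbar_k \subset \ker(\mil_0|_{\Pbar_k}) = \image(\mil_0|_{\Pbar_k})$. The concluding sentence is then obtained by combining (iii) for $i=0$ and $i=1$ and using the identity $\mil_1 = Sq^2 Sq^1 + Sq^1 Sq^2$ to absorb $\image \mil_1$ into $\image Sq^1 + \image Sq^2$; the equality $\image Sq^1 + \image Sq^2 = \overline{\cala(1)}\, \Pbar_k$ holds because any admissible monomial in $\cala(1)$ of positive degree begins with either $Sq^1$ or $Sq^2$, hence acts on $\Pbar_k$ through $\image Sq^1$ or $\image Sq^2$. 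The whole argument is a routine Margolis cohomology computation; the only points requiring care are the augmentation bookkeeping in (iii) and the elementary description of $\overline{\cala(1)}\, \Pbar_k$.
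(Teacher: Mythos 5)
Your proof follows the same path as the paper's (which is itself only a few lines: Künneth reduction to $k=1$ for (i)–(ii), then (iii) via the observation $\Phi\Pbar_k\subset\ker\mil_0=\image\mil_0$), and the computations are correct — in particular $\mil_i(x^n)=n\,x^{n+2^{i+1}-1}$ and the resulting identification of $H^*(P_1;\mil_i)$ are right, and the augmentation bookkeeping in (iii) is exactly what is needed. One small inaccuracy in the last sentence: it is \emph{not} true that every admissible monomial of positive degree in $\cala(1)$ begins with $Sq^1$ or $Sq^2$ (for instance $Sq^3$, $Sq^2Sq^1$, and $Sq^3Sq^1$ are admissible and do not). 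What you actually need, and what is true, is that $\cala(1)$ is generated as an algebra by $Sq^1$ and $Sq^2$, so $\overline{\cala(1)}=Sq^1\,\cala(1)+Sq^2\,\cala(1)$; applying this to $\Pbar_k$ gives $\overline{\cala(1)}\,\Pbar_k\subset\image Sq^1+\image Sq^2$, with the reverse inclusion obvious. With that wording corrected, the argument is complete and matches the paper's.
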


\begin{proof}
 The first two points are standard; for instance, to 
calculate the Margolis cohomology,  by the Künneth theorem one reduces  to the 
case $k=1$, which is elementary. Point  \ref{item:MO_3}  follows from  \ref{item:MO_1} and \ref{item:MO_2}.

Since $Q_0 = Sq^1$ and $Q_1 = [Sq^2, Sq^1]$, the final statement holds.
\end{proof}

Recall that $F(n)$ denotes the
free unstable module on a generator $\iota_n$ of degree $n$, for $n \in \nat$  
(see \cite[Section 1.6]{schwartz_book});  these 
modules 
form a set
of  projective generators of $\unst$. Moreover, $F(n)$  embeds in $P_n$ as the submodule generated by the class $\prod
_{i=1}^n x_i$.

\begin{nota}
For $M$ an unstable module, 
write $\st_s M $ for the image of the linear map $\st_s : \Phi^s M 
\dashrightarrow D_s
\otimes M$.
\end{nota}

\begin{rem}
 As graded vector spaces, there is a natural isomorphism $\st_s M \cong \Phi ^s 
M$, whence $\st_s$ is considered as an exact functor. 
\end{rem}

The total Steenrod power is 
multiplicative (see \cite{StE},  for example):   if $K$ is an unstable algebra and $x, y \in K$, 
then $\st_s (xy)= \st_s (x) \st_s (y)$, where the product is formed in  $R_s K 
\subset P_s \otimes K $.

The following  is stated for $s=3$ for notational simplicity.

\begin{lem}
\label{lem:length_homog}
 For $ k$ a non-negative integer, 
\begin{enumerate} 
 \item 
 there is an embedding 
 $
  R_3 F(k) 
  \hookrightarrow 
  R_3 P_k
 $ in $D_3 \dash \unst$ and $R_3 F(k)$ identifies as the free $D_3$-submodule of $R_3 P_k$ on $\st_3 F(k) $; 
\item 
$\st_3 F(k) \subset \field [V(j)| 1 \leq j \leq k] \subset R_3 P_k$ in particular, $\st_3 F(k) $ 
has a basis of length homogeneous elements (see Definition \ref{def:length}). 
\end{enumerate}
\end{lem}

\begin{proof}
The inclusion $F(k) \hookrightarrow P_k$ induces the embedding $R_3 F(k) 
  \hookrightarrow   R_3 P_k$, since $R_3$ is exact.  Since, by definition, $\st_3 x_j = V(j)$, it is straightforward to see that $\st_3 P_k$ lies in $\field [V(j)| 1 \leq j \leq k] $ and hence  so does $\st_3 F(k)$. Length homogeneity  follows from Lemma \ref{lem:length_homog_V}.
\end{proof}

\section{Reduction arguments}
\label{sect:results}

For a  graded module $N$, let $N^{>0}$ denote the submodule of   elements of 
 positive degree. The main result of the paper   can be restated as  follows:

\begin{thm}
\label{thm:main}
For an unstable module  $M$ and an integer $s\geq 3$,
the natural map 
\[
 (R_s M) ^{>0} 
 \rightarrow 
 \field \otimes _\cala (P_s \otimes M)  
\]
induced by $R_s M \hookrightarrow  P_s \otimes M$
and passage to $\cala$-indecomposables is zero. 
\end{thm}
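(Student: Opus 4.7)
The plan is to combine the standard reductions from Section~\ref{sect:results} with an explicit analysis of $R_3 P_k$ driven by the action of the Milnor operations $\mil_0$ and $\mil_1$. First I would reduce to the case $s=3$ and $M = P_k$: since $R_s$ is exact and $\field \otimes_\cala (P_s \otimes -)$ is right exact, a presentation of $M$ by free unstable modules reduces the problem to $M = F(n)$, and the embedding $F(n) \hookrightarrow P_n$ as $\sym_n$-invariants then reduces it further to $M = P_n$; for $s > 3$, the iterative structure of the Singer construction should reduce the problem to the case $s=3$.

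For the main case $s=3$ and $M = P_k$, Proposition~\ref{prop:invt_ring_Singer} identifies $R_3 P_k \subset P_{3+k}$ with the polynomial subalgebra $\field[c_{3,0}, c_{3,1}, c_{3,2}, V(1), \ldots, V(k)]$, and the task reduces to showing that every positive-degree monomial in these generators is $\cala$-decomposable in $P_{3+k}$. The main tool is Lemma~\ref{lem:Milnor_observation}: any positive-degree element of $\Pbar_{3+k}$ annihilated by $\mil_0$ or $\mil_1$ lies in $\overline{\cala(1)}\,\Pbar_{3+k}$, hence is $\cala$-decomposable. By Proposition~\ref{prop:Sq_invts} and the derivation property of the $\mil_i$, a monomial $m = c_{3,0}^\alpha c_{3,1}^\beta c_{3,2}^\gamma V$ (with $V$ a monomial in the $V(j)$'s) satisfies $\mil_0 m = 0$ whenever $\beta$ is even and $\mil_1 m = 0$ whenever $\gamma$ is even. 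This immediately handles most monomials; in particular each of $c_{3,0}, c_{3,1}, c_{3,2}, V(j)$ is killed by at least one of $\mil_0, \mil_1$ and so is $\cala$-decomposable on its own.

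The remaining, and hardest, case is when both $\beta$ and $\gamma$ are odd; here I would set up an induction on a suitable complexity measure such as the length $\lh(m)$ or the exponent sum $\beta + \gamma$. The inductive step would exploit the identities $c_{3,0} = \mil_0 c_{3,1} = Sq^1 c_{3,1}$ and $c_{3,0} = \mil_1 c_{3,2}$ together with the Cartan formula and the explicit $\cala$-action tabulated in Proposition~\ref{prop:Sq_invts}, rewriting $m$ in the form $Sq^i(\text{something}) + (\text{correction})$ where the correction has strictly smaller complexity and is decomposable by the inductive hypothesis. The principal obstacle will be organising this induction so that every correction term is genuinely simpler (and, when it involves factors that leave $R_3 P_k$, is still accessible to a parallel argument in $P_{3+k}$), and ensuring the induction terminates; tracking precisely which Steenrod operations arise should then yield the refined statement promised in Theorem~\ref{thm:hyper_refined}.
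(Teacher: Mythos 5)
Your overall strategy---reduce to $s=3$, use the explicit description of $R_3P_k$ and the vanishing of $\mil_0,\mil_1$ on most monomials, then induct on the hard case---is in the right spirit, but there are two gaps, one of which is fatal as stated.

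The fatal gap is the reduction from $M=F(n)$ to $M=P_n$. The theorem concerns the map $R_3 M \to \field\otimes_\cala(P_3\otimes M)$, whose \emph{target} depends on $M$. The inclusion $F(n)\hookrightarrow P_n$ gives a commutative square comparing the map for $F(n)$ with the map for $P_n$, but the induced arrow $\field\otimes_\cala(P_3\otimes F(n)) \to \field\otimes_\cala(P_3\otimes P_n)=\field\otimes_\cala P_{3+n}$ is not injective in general. Hence showing that an element of $R_3 F(n)$ becomes $\cala$-decomposable in $P_{3+n}$ does \emph{not} show it is $\cala$-decomposable in $P_3\otimes F(n)$, which is what is required. (Indeed, the case $M=P_n$ was already settled in \cite{HN2}; the content of Theorem~\ref{thm:main} is precisely to go beyond it.) The paper avoids this by never leaving the module $P_3\otimes_{D_3}R_3 F(n)\cong P_3\otimes\st_3F(n)\subset P_3\otimes F(n)$: all Steenrod operations are applied to elements of $\Pbar_3\otimes_{D_3}R_3F(n)$, so the decomposability is witnessed inside $P_3\otimes F(n)$. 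You can still \emph{use} $R_3F(n)\hookrightarrow R_3P_n$ to get the explicit monomial basis and to read off the $\cala$-action (this is what Lemma~\ref{lem:length_homog} does), but the hitting elements must be taken from $P_3\otimes_{D_3}R_3F(n)$, not from $P_{3+n}$ at large. Relatedly, your argument handles the $\rtilde[3]$-part and the $\st_3$-part on the same footing, but for a pure $\st_3$-element there is no $\Pbar_3$-factor to absorb, so the $\mil_0$-acyclicity of $\Pbar_3$ alone doesn't apply; the paper needs the separate observation that, modulo $\image Q_0$, $\st_3(x)$ reduces to $1\otimes(Sq_0)^3(x)$, which is $Sq^{4|x|}$ of something (Proposition~\ref{prop:st3_hit}).

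The second, less severe, gap is in the hard case $\beta,\gamma$ both odd. An induction on $\lh(m)$ or on $\beta+\gamma$ does not obviously close: applying $Sq^1$ or $Sq^2$ to reduce $\beta$ or $\gamma$ produces correction terms whose length and exponent sum can increase (e.g.\ $Sq^4c_{3,2}=c_{3,2}^2$, $Sq^8V(j)=V(j)^2$). The paper's resolution is the notion of \emph{fullness}: write $c_{3,2}^{\gamma}c_{3,1}^{\beta}=(c_{3,2}c_{3,1})^{2^f-1}(c_{3,2}^{j_2}c_{3,1}^{j_1})^{2^f}$ with $j_1j_2$ even and induct on $f$, using the key input that $c_{3,2}c_{3,1}\in\image Sq^1+\image Sq^2$ (Lemma~\ref{lem:B}) plus a secondary length argument (Lemmas~\ref{lem:exclude} and~\ref{lem:parity_case}) to control the corrections. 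Without isolating this $2$-adic structure and the fact $c_{3,2}c_{3,1}$ is itself decomposable, the rewriting step you propose has no reason to terminate.
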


The following standard argument reduces to the case $s=3$:

\begin{prop}
\label{prop:reduction_s=3}
 Suppose that $
 (R_3 M) ^{>0} 
 \rightarrow 
 \field \otimes _\cala (P_3 \otimes M)  
$ is zero for each unstable module $M$, then 
\[
 (R_s M) ^{>0} 
 \rightarrow 
 \field \otimes _\cala (P_s \otimes M)  
\]
is zero for each unstable module $M$ and all $s \geq 3$. 
\end{prop}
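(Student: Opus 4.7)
The plan is a one-step reduction that exploits the iteration property of the Singer functor. For $s = 3$ the conclusion is exactly the hypothesis, so fix $s \geq 4$, write $t := s - 3 \geq 1$, and set $N := R_t M \in \unst$. The goal is to identify $R_s M$ with $R_3 N$ compatibly with the inclusions into the ambient polynomial algebras, and then apply the hypothesis to the unstable module $N$.

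Specifically, I would verify (either by unpacking the recursive definition $\st_s = \st_1 \circ \st_{s-1}$ or by appeal to \cite{LZ} and \cite{Psinger}) the natural factorization
\[
R_s M \;\cong\; R_3 N \;\hookrightarrow\; P_3 \otimes N \;\hookrightarrow\; P_3 \otimes P_t \otimes M \;\cong\; P_s \otimes M
\]
of the canonical inclusion $R_s M \hookrightarrow P_s \otimes M$, where the middle arrow is $P_3 \otimes (-)$ applied to the canonical $\cala$-linear inclusion $N = R_t M \hookrightarrow P_t \otimes M$, and the final isomorphism uses the standard unstable algebra identification $P_s \cong P_3 \otimes P_t$.

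Given this factorization, the argument is immediate. Let $x \in (R_s M)^{>0}$ and view $x$ as a positive-degree element of $R_3 N$. By the hypothesis applied to the unstable module $N$, the element $x$ is $\cala$-decomposable in $P_3 \otimes N$. Since $P_3 \otimes N \hookrightarrow P_s \otimes M$ is $\cala$-linear, $\cala$-decomposables map to $\cala$-decomposables, and by the factorization the image of $x$ in $P_s \otimes M$ agrees with its image under the direct inclusion $R_s M \hookrightarrow P_s \otimes M$. Hence $x$ maps to zero in $\field \otimes_\cala (P_s \otimes M)$, as required.

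The only non-trivial point is the iteration formula and the compatibility of the canonical inclusions in the first step; this is essentially formal, following from the multiplicativity of the total Steenrod power together with the recursive construction $\st_s = \st_1 \circ \st_{s-1}$, but some care is required to confirm that the two natural descriptions of $R_s M \hookrightarrow P_s \otimes M$ genuinely coincide. Note that no induction on $s$ is needed: for each $s \geq 4$ one simply applies the case $s = 3$ afresh with $N := R_{s-3} M$.
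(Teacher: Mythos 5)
Your approach coincides with the paper's: factor the canonical inclusion $R_s M \hookrightarrow P_s \otimes M$ through $R_3 R_{s-3}M \hookrightarrow P_3 \otimes R_{s-3}M$ and invoke the $s=3$ hypothesis with $N = R_{s-3}M$. One correction, however: your claimed identification $R_s M \cong R_3 R_{s-3}M$ is too strong. In general there is only a natural \emph{inclusion} $R_s M \hookrightarrow R_3 R_{s-3}M$, which is what the paper uses (citing \cite{LZ}). For instance, with $M = \field$ and $s = 4$, one has $R_4\field = D_4 = P_4^{\gl_4}$, whereas $R_3 R_1 \field = R_3 P_1 \cong P_4^{\gl_{3,1}}$ by Proposition~\ref{prop:invt_ring_Singer}; since $\gl_{3,1}$ is a proper subgroup of $\gl_4$, the inclusion $D_4 \hookrightarrow R_3 P_1$ is strict. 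Fortunately the mis-statement does not affect your argument: an $\cala$-linear inclusion is all that is used, since the hypothesis kills $(R_3 N)^{>0}$ in $\field \otimes_\cala(P_3 \otimes N)$ and therefore also the image of the subobject $(R_s M)^{>0}$. Replacing the ``$\cong$'' by ``$\hookrightarrow$'' in your factorization yields precisely the paper's commutative-diagram argument.
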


\begin{proof}
For $s\geq 3$, by Proposition \ref{prop:Singer_properties}, there is a natural inclusion 
$R_s 
M \hookrightarrow R_3
R_{s-3} M $ and this fits into the commutative diagram 
\[
 \xymatrix{
 R_s M 
 \ar[r]
 \ar@{^(->}[d]
 &
  \field \otimes _\cala (P_s \otimes M)  
  \\
   R_3 R_{s-3} M
 \ar[r]
 &
   \field \otimes _\cala (P_3 \otimes R_{s-3}M),  
 \ar[u]
 }
\]
where the upwards arrow is induced by $R_{s-3} M \hookrightarrow
P_{s-3} \otimes M$ and the isomorphism $P_s \cong P_3 \otimes P_{s-3}$.
The lower horizontal arrow is the morphism $R_3 N \rightarrow    \field 
\otimes _\cala (P_3 \otimes N)$ with $N= R_{s-3}M$. 
By hypothesis, this is zero on positive degree elements, hence so is the upper. 
\end{proof}

\begin{nota} 
For $s\in \nat$ and $M$ an unstable module, write  $\rtilde M$ for 
$\overline{D}_s \otimes_{D_s} R_s M$, considered as an unstable module.
\end{nota}

\begin{lem}
\label{lem:rtilde:properties}
For $s \in \nat$ and $M$ an unstable module,
 \begin{enumerate}
  \item 
  $\rtilde M $ is the kernel of the canonical surjection $R_s M \twoheadrightarrow \Phi^s M \cong \field \otimes_{D_s} R_s M$  (see Proposition \ref{prop:Singer_properties} equation (\ref{eqn:proj_Phi})); 
  \item
  there is a natural isomorphism of graded vector spaces $R_s M \cong \rtilde M
\oplus \st_s M$;
  \item 
  the functor $\rtilde : \unst \rightarrow \unst$ is exact and commutes with
direct sums.
 \end{enumerate}
\end{lem}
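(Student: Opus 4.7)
The plan is to derive all three parts from a single structural fact: $R_s M$ is flat (indeed free) over $D_s$. Granting this, part (i) is immediate from the $\tor$ long exact sequence for $0 \to \overline{D}_s \to D_s \to \field \to 0$ applied to $R_s M$, since the vanishing of $\tor_1^{D_s}(\field, R_s M)$ makes $\rtilde M \to R_s M$ injective with image the kernel of $R_s M \twoheadrightarrow \field \otimes_{D_s} R_s M \cong \Phi^s M$. Part (ii) then follows because the natural graded vector space inclusion $\st_s M \hookrightarrow R_s M$, composed with the projection of~(\ref{eqn:proj_Phi}), is the canonical isomorphism $\st_s M \cong \Phi^s M$; this provides a section and hence the claimed splitting. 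Part (iii) combines the exactness of $R_s$ and of $\st_s$ (via $\st_s M \cong \Phi^s M$) with (ii) to give exactness of $\rtilde$; commutation with direct sums follows from the corresponding property of $R_s$ together with the additivity of $\overline{D}_s \otimes_{D_s} -$.

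The substantive step is therefore the flatness of $R_s M$ over $D_s$. I would begin with the case $M = F(n)$, where the $s$-fold analogue of Lemma~\ref{lem:length_homog} exhibits $R_s F(n)$ as a free $D_s$-module with length-homogeneous basis $\st_s F(n)$. For a general unstable module $M$, choose a resolution $P_\bullet \to M \to 0$ in $\unst$ by direct sums of such $F(n)$'s. Applying $R_s$, which is exact and commutes with direct sums, yields a resolution $R_s P_\bullet \to R_s M \to 0$ by free $D_s$-modules.

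Applying $\field \otimes_{D_s} -$ to this free $D_s$-resolution computes $\tor_*^{D_s}(\field, R_s M)$. Under the identification $\field \otimes_{D_s} R_s N \cong \Phi^s N$, the complex becomes $\Phi^s P_\bullet$, which is exact in positive degrees since $\Phi^s$ is exact and $P_\bullet \to M \to 0$ is a resolution. Hence $\tor_i^{D_s}(\field, R_s M) = 0$ for $i \geq 1$, yielding the desired flatness. The main delicate point is to verify both the $s$-fold generalization of Lemma~\ref{lem:length_homog} and the compatibility of $R_s$ with direct sums; both follow readily from the description of $R_s$ via the total Steenrod power given in~\cite{LZ}.
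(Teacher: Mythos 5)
Your proposal is correct and arrives at the same conclusions by the same basic mechanism (exactness of $- \otimes_{D_s} R_s M$ applied to $0 \to \overline{D}_s \to D_s \to \field \to 0$), but it takes a longer path to the key input. The paper's one-line proof of (i) simply invokes the fact --- part of the basic structure theory of the Singer functor, established in \cite{LZ} and \cite{Psinger} --- that $R_s M$ is \emph{free} over $D_s$ on the graded vector space $\st_s M$, which immediately gives exactness. Your alternative proves the weaker but sufficient vanishing $\tor_1^{D_s}(\field, R_s M) = 0$ by resolving $M$ in $\unst$ by coproducts of $F(n)$'s, applying the exact, coproduct-preserving functor $R_s$ to obtain a free $D_s$-resolution of $R_s M$, and observing that $\field \otimes_{D_s} R_s P_\bullet \cong \Phi^s P_\bullet$ is acyclic. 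This is valid and self-contained, but it still relies on the same structural facts about $R_s$ (freeness of $R_s F(n)$ over $D_s$, exactness, additivity) and is considerably more work than citing the general freeness, which is both available and no harder than the special case you use at the base of your resolution argument. Your treatments of (ii) (the composite $\st_s M \hookrightarrow R_s M \twoheadrightarrow \Phi^s M \cong \field\otimes_{D_s}R_sM$ is the identity on $\Phi^s M$, giving a natural section) and (iii) (exactness via the snake lemma applied to the natural short exact sequences of (i), or equivalently the natural vector-space splitting of (ii); direct sums via additivity of $\overline{D}_s \otimes_{D_s} -$ and of $R_s$) are correct and correspond to what the paper dismisses as ``straightforward.''
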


\begin{proof}  
 The result follows by applying the  functor $- \otimes _{D_s} 
R_s 
M$
to the short exact sequence of $D_s$-modules:
$ 
 0
\rightarrow 
\overline{D}_s
\rightarrow 
D_s 
\rightarrow 
\field 
\rightarrow 
0,
$ 
since $R_s M$ is a free $D_s$-module (see Proposition \ref{prop:Singer_properties}).
\end{proof}

\begin{lem}
\label{lem:induct_include}
For $s \in \nat$ 
 and $M$ an unstable module, the canonical inclusion $R_sM
\hookrightarrow D_s \otimes M$ induces an inclusion 
 \[
  P_s \otimes _{D_s} R_s M \hookrightarrow P_s \otimes M
 \]
in the category $P_s\dash \unst$ of $P_s$-modules in $\unst$. 
 
Considered as a subobject of $P_s \otimes M$, the underlying $P_s$-module is free on $\st_s M$.
\end{lem}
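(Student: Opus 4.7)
The plan is to invoke two freeness results: that $P_s$ is free (hence flat) as a module over the Dickson algebra $D_s$, and that $R_s M$ is free as a $D_s$-module on $\st_s M$. Both are classical; the second is already used in the proof of Lemma~\ref{lem:rtilde:properties} and is exhibited explicitly for $M = F(k)$ in Lemma~\ref{lem:length_homog}.

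For the embedding, I would apply the exact functor $P_s \otimes_{D_s} -$ to the canonical inclusion $R_s M \hookrightarrow D_s \otimes M$; flatness of $P_s$ over $D_s$ preserves this injection, and the canonical identification $P_s \otimes_{D_s} (D_s \otimes M) \cong P_s \otimes M$ rewrites the target, yielding the desired inclusion $P_s \otimes_{D_s} R_s M \hookrightarrow P_s \otimes M$.

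For the second assertion, freeness of $R_s M$ over $D_s$ on $\st_s M$ provides a $P_s$-module isomorphism $P_s \otimes_{D_s} R_s M \cong P_s \otimes \st_s M$, so under the embedding above the image is precisely the free $P_s$-submodule of $P_s \otimes M$ with basis $\st_s M$.

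The only obstacle is bookkeeping: ensuring both freeness inputs are treated as available. Both being standard facts about the Singer construction and the Dickson invariants, and both already implicitly appealed to elsewhere in the excerpt, no substantive calculation should be required.
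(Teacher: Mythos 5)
Your proposal is correct and matches the paper's (terse) justification, which simply states that the lemma is an immediate consequence of $P_s$ being free as a $D_s$-module; you have merely spelled out the flatness argument and the identification $P_s \otimes_{D_s} R_s M \cong P_s \otimes \st_s M$ coming from freeness of $R_s M$ over $D_s$ on $\st_s M$, both of which the paper treats as standard.
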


\begin{proof}
The first statement is clear and the second is an immediate consequence of the fact that $
R_s M$ is free as a $D_s$-module on $\st_s M \subset P_s \otimes M$ (see Proposition \ref{prop:Singer_properties}).
\end{proof}

The natural map $R_s M \rightarrow  \field \otimes _\cala (P_s \otimes M) $
 factors 
\[
 \xymatrix{
 R_s M 
 \ar[r]
 \ar[rd]
 &
 \field \otimes _\cala (P_s \otimes_{D_s} R_s M) 
 \ar[d]
 \\
& \field \otimes _\cala (P_s \otimes M)
 }
\]
and 
restricts to a linear map 
$
\Phi^s M \cong  \st_s M \dashrightarrow  \field \otimes _\cala (P_s \otimes M). 
$

Theorem \ref{thm:main} follows (using Lemma \ref{lem:rtilde:properties}) from
the refined statement:

\begin{thm}
\label{thm:refined}
For  an unstable module  $M$ and  an integer $s>2$, 
  \begin{enumerate}
   \item 
  $ \st_s M^{>0}  \dashrightarrow  \field \otimes _\cala (P_s \otimes M)$ is 
zero;
  \item
  the composite morphism $\rtilde[s] M \hookrightarrow  R_s M \rightarrow  
\field \otimes
_\cala (P_s \otimes_{D_s} R_s M) $ is zero.
  \end{enumerate}
\end{thm}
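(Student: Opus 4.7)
The plan is to reduce to the case $s=3$ via Proposition~\ref{prop:reduction_s=3} and then to prove parts (i) and (ii) by inductive arguments built around the Milnor operations $Q_0 = Sq^1$ and $Q_1 = [Sq^2, Sq^1]$, exploiting Lemma~\ref{lem:Milnor_observation} and the Steenrod action on Dickson and Mui invariants recorded in Proposition~\ref{prop:Sq_invts}.

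For part (i), I would first observe that $\st_3 \cong \Phi^3$ is exact and that $\field \otimes_\cala(P_3 \otimes -)$ is right exact; by naturality, this reduces the vanishing statement to $M = F(n)$ for $n \geq 1$, since the $F(n)$ form projective generators of $\unst$. Embedding $F(n) \hookrightarrow P_n$, Lemma~\ref{lem:length_homog} realises $\st_3 F(n)$ as length-homogeneous symmetric polynomials in the Mui invariants $V(1), \dots, V(n) \in P_{n+3}$. Proposition~\ref{prop:Sq_invts} yields $Q_0 V(j) = Q_1 V(j) = 0$, so by the derivation property every element of $\st_3 F(n)$ lies in $\ker Q_0 \cap \ker Q_1$, which by Lemma~\ref{lem:Milnor_observation} is contained in $\image Sq^1 + \image Sq^2$ inside $\Pbar_{n+3}$. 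The substantive task is to refine these antiderivatives so that they actually lie inside $P_3 \otimes F(n) \subset P_{n+3}$; I would proceed by induction on length, with the base case $\st_3 F(1)$, where $V(j) = x_j^8 + c_{3,2} x_j^4 + c_{3,1} x_j^2 + c_{3,0} x_j$, treated term-by-term via $Sq^1$- and $Sq^2$-images of elements of the form $p \otimes x_j^{2^i}$, the Dickson factors being absorbed into $P_3$-antiderivatives supplied by the $Sq^1$-acyclicity of $P_3$ from Lemma~\ref{lem:Milnor_observation}(\ref{item:MO_1}).

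For part (ii), any element of $\rtilde[3] M$ has the form $c \cdot \st_3 m$ with $c \in \overline{D}_3^{>0}$. Using $Sq^1 c_{3,1} = c_{3,0}$ and $Sq^2 c_{3,2} = c_{3,1}$ from Proposition~\ref{prop:Sq_invts} together with the derivation property, I would write
\[
c_{3,0} \cdot \xi = Sq^1(c_{3,1} \cdot \xi) + c_{3,1} \cdot Sq^1 \xi, \qquad c_{3,1} \cdot \xi = Sq^2(c_{3,2} \cdot \xi) + c_{3,2} \cdot Sq^2 \xi,
\]
so as to strip $c_{3,0}$- and $c_{3,1}$-factors modulo $\cala$-decomposables in $P_3 \otimes_{D_3} R_3 M$. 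Since $Sq^1 V(j) = Sq^2 V(j) = 0$, the correction terms involving $Sq^i \xi$ remain in the same submodule and are controlled by the structure of $\cala$ acting on $R_3 M$, which permits an induction on the number of $c_{3,0}$- and $c_{3,1}$-factors in $c$. The base case $c = c_{3,2}^k$ I would handle via the $Sq^1$-acyclicity of $P_3$ applied to $c_{3,2}^k \in \Pbar_3$: writing $c_{3,2}^k = Sq^1 q$ for some $q \in P_3$, one obtains $c_{3,2}^k \cdot \st_3 m = Sq^1(q \cdot \st_3 m) + q \cdot Sq^1 \st_3 m$ in $P_3 \otimes_{D_3} R_3 M$, and the residual $q \cdot Sq^1 \st_3 m$ is handled by a parallel induction.

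The main obstacle I anticipate is the containment of antiderivatives in the correct submodule. Lemma~\ref{lem:Milnor_observation} supplies $Sq^1$- and $Sq^2$-antiderivatives only inside the ambient polynomial ring $P_{n+3}$, whereas parts (i) and (ii) respectively require antiderivatives inside the strictly smaller $P_3 \otimes F(n)$ and $P_3 \otimes_{D_3} R_3 M$. The inductive scheme, organised by length on the $\st_3$ side (Lemma~\ref{lem:length_homog}) and by Dickson-monomial length on the $\rtilde[3]$ side, is designed precisely so that at each stage the antiderivatives can be written as tensors of explicit $P_3$-elements with generators of the target submodule; tracking the Cartan-formula corrections and ensuring this containment at every step will be the principal technical difficulty.
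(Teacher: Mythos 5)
Your overall framework — reduce to $s=3$, exploit the trivial action of $Q_0$, $Q_1$ on $\st_3$, and invoke the $Q_0$-acyclicity of $\Pbar_3$ to find antiderivatives of Dickson coefficients inside $P_3$ rather than $D_3$ — is the correct starting point and matches the paper's strategy. For part (i) you have essentially the right idea, but the ``term-by-term'' handling of $V(j) = 1\otimes x^8 + c_{3,2}\otimes x^4 + c_{3,1}\otimes x^2 + c_{3,0}\otimes x$ does not quite work as stated, because the individual tensors $c_{3,1}\otimes x^2$ and $c_{3,0}\otimes x$ are \emph{not} $Q_0$-closed separately; the correct organization is to expand $\st_3\iota_n = \sum c^I\otimes y_I$ over the monomial basis of $D_3$ and use the structure theorem for $\ker Q_0$ in $\overline{D}_3\otimes M$ (Lemma~\ref{lem:mil0_reduction}) to see that the obstruction reduces to the single term $1\otimes(Sq_0)^3\iota_n$, which is visibly $Sq^{4n}$ of something. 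No length induction is needed for part (i). This is a matter of precision rather than a fatal error.

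For part (ii), however, there is a genuine gap. Your stripping identities only reduce the Dickson monomial in the ``easy'' parity cases. Concretely, take $\monom = c_{3,2}^{i_2}c_{3,1}^{i_1}c_{3,0}^{i_0}\st_3 m$ with $i_0>0$ and write $\xi = c_{3,2}^{i_2}c_{3,1}^{i_1}c_{3,0}^{i_0-1}\st_3 m$. Then $Q_0\xi = i_1\,c_{3,2}^{i_2}c_{3,1}^{i_1-1}c_{3,0}^{i_0}\st_3 m$, so the correction term in your first identity is $c_{3,1}Q_0\xi = i_1\monom$: when $i_1$ is odd the identity collapses to $0 = Sq^1(c_{3,1}\xi)$ and you learn nothing about $\monom$. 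The same phenomenon kills the $Sq^2$-stripping when $i_2$ is odd. When both $i_1$ and $i_2$ are even, you don't need to strip at all, since then $\monom\in\ker Q_0\cap\ker Q_1$ and $Q_0$-acyclicity of $\Pbar_3$ finishes immediately (this is Lemma~\ref{lem:full_reduction}). The genuinely hard case is when $i_1$ and $i_2$ are both odd, and there your proposed induction does not advance. The paper handles this with two ingredients your proposal lacks: the nontrivial fact (Lemma~\ref{lem:B}, from Hung--Nam) that $c_{3,2}c_{3,1}\in\image Sq^1 + \image Sq^2$ as an element of $P_3$ — note that $c_{3,2}c_{3,1}$ lies in neither $\ker Q_0$ nor $\ker Q_1$, so this does not follow from acyclicity — together with a double induction on the \emph{fullness} of the Dickson monomial and the \emph{length} of the $\st_3$-factor (Lemmas~\ref{lem:exclude} and \ref{lem:parity_case}, Theorem~\ref{thm:hyper_refined}), which is needed to control the new terms created by $Sq^{2^{f}}$ and $Sq^{2^{f+1}}$ hitting the $V(j)$-generators. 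You correctly flagged the control of Cartan-formula corrections as ``the principal technical difficulty'' but did not supply the mechanism that resolves it.
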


\begin{rem}
By Proposition \ref{prop:reduction_s=3}, it suffices to treat the case $s=3$. 
For this, a more refined result is stated in Theorem \ref{thm:hyper_refined}.
\end{rem}

The proof of the  theorem  is reduced to considering the projective generators of unstable modules by the following:

\begin{lem}
\label{lem:F(n)-reduction}
 Theorem \ref{thm:refined} holds for all unstable modules $M$ if and only if it
holds for the free unstable module $F (n)$, for any  non-negative integer $n$. 
\end{lem}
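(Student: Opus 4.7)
The plan is to exploit naturality together with the fact that every functor appearing in the natural transformations of Theorem \ref{thm:refined} commutes with direct sums. The forward direction is trivial, since $F(n)$ is an unstable module, so only the converse requires work.

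First, I would observe that if the vanishing claims in Theorem \ref{thm:refined} hold for each $F(n)$, then they automatically hold for any direct sum $\bigoplus_\alpha F(n_\alpha)$. Indeed, $R_s$ and $\rtilde[s]$ are exact by Lemma \ref{lem:rtilde:properties} (hence preserve arbitrary direct sums), $\st_s \cong \Phi^s$ is exact, $P_s \otimes (-)$ is exact, $P_s \otimes_{D_s} R_s(-)$ commutes with direct sums (since $R_s$ does and $R_s M$ is free as a $D_s$-module), and $\field \otimes_\cala(-)$ is a colimit and therefore commutes with direct sums. Applying these to $\bigoplus_\alpha F(n_\alpha)$ identifies each natural transformation in Theorem \ref{thm:refined} with the direct sum of the corresponding transformations at the summands $F(n_\alpha)$, so vanishing propagates summand-wise.

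Next, for an arbitrary $M \in \unst$, choose a surjection $\pi \colon P \twoheadrightarrow M$ with $P = \bigoplus_\alpha F(n_\alpha)$ a direct sum of free unstable modules (this is possible since the $F(n)$ are projective generators of $\unst$). Since $R_s$, $\rtilde[s]$, and $\Phi^s$ are exact, $\pi$ induces surjections $R_s P \twoheadrightarrow R_s M$, $\rtilde[s] P \twoheadrightarrow \rtilde[s] M$, and $\st_s P \twoheadrightarrow \st_s M$. Given any positive-degree element of $R_s M$ (respectively an element of $\rtilde[s] M$, or of $\st_s M^{>0}$), lift it along the appropriate surjection to an element over $P$; by the first observation together with the hypothesis at each $F(n_\alpha)$, the chosen lift maps to zero in $\field \otimes_\cala (P_s \otimes P)$ or in $\field \otimes_\cala (P_s \otimes_{D_s} R_s P)$. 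Naturality of the transformations of Theorem \ref{thm:refined} then forces the original element of $M$ to map to zero as well.

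The argument is purely formal; there is no real obstacle beyond verifying the bookkeeping for parts (i) and (ii) of Theorem \ref{thm:refined} separately, which proceeds identically in both cases. The substantive content of Theorem \ref{thm:refined} is of course concentrated in the case $M = F(n)$, which is where the technical work (Milnor operations, length filtration on $R_3 P_k$, etc.) will be required.
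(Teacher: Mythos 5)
Your argument is correct and is precisely the formal argument the paper sketches in a single sentence (projective generators, exactness, commutation with direct sums, then lift along a surjection and invoke naturality); you have simply made the bookkeeping explicit.
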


\begin{proof}
 This is a formal consequence of the fact that the set of all $F(n)$, $n$ 
a non-negative integer, forms a set of projective generators of $\unst$ and 
that 
$\st_3$, $\rtilde[3]$ 
are exact and commute with direct sums, by Lemma \ref{lem:rtilde:properties}. 
\end{proof}

\section{The image of $\st_s$}
\label{sect:image_sts}

 The aim of this section is to prove Proposition \ref{prop:st3_hit} below, 
which corresponds to the first statement of  Theorem \ref{thm:refined}. 

Recall that, for $M$ an unstable module, $\st_3$ denotes the linear map 
$$\Phi^3
M \dashrightarrow D_3 \otimes M \subset P_3 \otimes M
$$
and $\st_3 M$ denotes its image. 

\begin{lem}
\label{lem:mil_trivial}
 For $M$ an unstable module, $\mil_0, \mil_1, Sq^2 $ act 
trivially on
$\st_3 M$.
\end{lem}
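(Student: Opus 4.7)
The plan is to reduce, by naturality, to the universal case where $x$ is the generator $\iota_n \in F(n)$, and then to compute explicitly using the embedding $F(n) \hookrightarrow P_n$.

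First, observe that although $\st_3 \colon \Phi^3 M \dashrightarrow D_3 \otimes M$ is not $\cala$-linear, it is natural in $M \in \unst$ when considered as a natural transformation of functors valued in graded vector spaces. Hence, given $x \in M$ of degree $n$, the unique $\cala$-linear map $f \colon F(n) \to M$ with $f(\iota_n) = x$ yields, via $1 \otimes f \colon D_3 \otimes F(n) \to D_3 \otimes M$, an identity $(1 \otimes f)(\st_3 \iota_n) = \st_3(x)$. Since $1 \otimes f$ is $\cala$-linear and the operations $Q_0$, $Q_1$, $Sq^2$ act on $D_3 \otimes M$ via the Cartan formula, it suffices to prove the lemma for $x = \iota_n \in F(n)$, for each $n \in \nat$.

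Next, use the embedding $F(n) \hookrightarrow P_n$ as the submodule of symmetric elements generated by $\prod_{i=1}^n x_i$ (recalled in Section~\ref{sect:background}), which sends $\iota_n$ to $\prod_{i=1}^n x_i$. By the multiplicativity of the total Steenrod power, together with $\st_3(x_i) = V(i)$,
\[
 \st_3(\iota_n) = \prod_{i=1}^n V(i) \in R_3 P_n.
\]
Thus it remains to verify that $Q_0$, $Q_1$, and $Sq^2$ annihilate this product.

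For $Q_0$ and $Q_1$: by Proposition~\ref{prop:Sq_invts}, $Q_0 V(i) = Q_1 V(i) = 0$ for each $i$. Since the Milnor operations are primitive in $\cala$, they act as derivations on the unstable algebra $R_3 P_n$, and hence vanish on the product $\prod V(i)$. For $Sq^2$, inspection of the table in Proposition~\ref{prop:Sq_invts} shows that $Sq^1 V(i) = 0$ and $Sq^2 V(i) = 0$ for every $i$; expanding $Sq^2\!\left(\prod_{i=1}^n V(i)\right)$ by the Cartan formula gives a sum of terms each containing either a factor $Sq^2 V(i)$ or a pair of factors $Sq^1 V(i) \, Sq^1 V(j)$, so every term vanishes.

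There is no substantive obstacle; the only point requiring care is the naturality reduction, since $\st_3$ is not $\cala$-linear — it is crucial that the Steenrod action on the target $D_3 \otimes M$ is compatible with the naturality in $M$, which is automatic from the Cartan formula.
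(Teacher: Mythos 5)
Your proof is correct and follows essentially the same route as the paper's: reduce by naturality to the universal example $\iota_n \in F(n)$, embed $F(n) \hookrightarrow P_n$ so that $\st_3(\iota_n) = \prod_{j=1}^n V(j)$, then observe from Proposition~\ref{prop:Sq_invts} that $Sq^1$ and $Sq^2$ kill each $V(j)$ and conclude via the Cartan formula. The only cosmetic difference is that you spell out the naturality reduction (which the paper dismisses as straightforward) and handle $Q_0,Q_1$ via the derivation property separately from $Sq^2$ via Cartan, whereas the paper treats all three at once through the Cartan formula.
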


\begin{proof}
 It is straightforward to reduce to the universal example $x = \iota_n \in 
F(n)$, the fundamental class of the free unstable module $F (n)$ (see Section 
\ref{sect:background}). 
 Now $F(n) \hookrightarrow P_n= \field [x_1, \ldots , x_n]$ under 
the map induced by $\iota_n \mapsto \prod_{i=1}^n x_i$, so that   $\st_3 
(\iota_n) \in 
\st_3 (F(n)) \subset R_3 P_n$; here one has  $\st_3 (\iota_n) = \prod_{j=1}^n 
V(j)$.  Since $Sq^1 , Sq^2$ (and 
hence $Q_1$) act trivially on $V(j)$   by  Proposition 
\ref{prop:Sq_invts}, the result follows from the Cartan formula.
\end{proof}

Consider the subalgebra $\cala (0) \subset \cala$ generated by $\mil_0$, which 
identifies with the exterior algebra $\Lambda (\mil_0)$.

\begin{lem}
\label{lem:ker_image}
As an $\cala (0)$-module: 
$$D_3 \cong (\field [c_{3,2}], Q_0c_{3,2} =0) \otimes 
(\field [c_{3,1}, c_{3,0}], \mil_0 c_{3,1} = c_{3,0}).
 $$
Hence
\begin{enumerate}
 \item 
$H_* (D_3;\mil_0) \cong \field [c_{3,2}, c_{3,1}^2]$;
\item 
a monomial in $\{c_{3,0}, c_{3,1}, c_{3,2} \}$
of $D_3$ lies in $\ker \mil_0 $ if and only if it has the 
form $ c_{3,2}^{i_2} c_{3,1}^{2n} c_{3,0} ^{t}$ for $i_2,n, t\in \nat$; if $t>0$
then it lies in $\image \mil_0$.
\end{enumerate}
\end{lem}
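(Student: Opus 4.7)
The plan is to read off the $\cala(0)$-action directly from the table of Proposition~\ref{prop:Sq_invts}, use the fact that $Q_0 = Sq^1$ is a derivation on $D_3$, and then compute on monomials.

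First, since $D_3 \cong \field[c_{3,0}, c_{3,1}, c_{3,2}]$ as an algebra and $Q_0$ is primitive in $\cala$, it acts as a derivation. From the table one reads $Q_0 c_{3,2} = 0$, $Q_0 c_{3,1} = c_{3,0}$, and $Q_0 c_{3,0} = 0$. Because $Q_0$ annihilates $c_{3,2}$ and preserves the subalgebra $\field[c_{3,1}, c_{3,0}]$, the tensor product decomposition of algebras $D_3 \cong \field[c_{3,2}] \otimes \field[c_{3,1}, c_{3,0}]$ is also a decomposition of $\cala(0)$-modules with the two factors acquiring the indicated $Q_0$-actions.

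For (i), I would apply the Künneth formula for Margolis cohomology to this tensor decomposition. The factor $\field[c_{3,2}]$ has trivial $Q_0$-action, so its Margolis cohomology is just $\field[c_{3,2}]$. For the factor $\field[c_{3,1}, c_{3,0}]$, the derivation formula gives $Q_0(c_{3,1}^a c_{3,0}^b) = a\, c_{3,1}^{a-1} c_{3,0}^{b+1}$, which (mod $2$) vanishes precisely when $a$ is even, and when $a$ is odd exhibits $c_{3,1}^{a-1} c_{3,0}^{b+1}$ as a boundary; hence $H_*(\field[c_{3,1}, c_{3,0}]; Q_0) \cong \field[c_{3,1}^2]$. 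Putting the factors together gives $H_*(D_3; Q_0) \cong \field[c_{3,2}, c_{3,1}^2]$ as claimed.

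For (ii), the same derivation formula applied to a monomial $c_{3,2}^{i_2} c_{3,1}^{i_1} c_{3,0}^{t}$ gives $Q_0(c_{3,2}^{i_2} c_{3,1}^{i_1} c_{3,0}^{t}) = i_1\, c_{3,2}^{i_2} c_{3,1}^{i_1 - 1} c_{3,0}^{t+1}$, so the monomial lies in $\ker Q_0$ exactly when $i_1$ is even; writing $i_1 = 2n$ yields the asserted form. For the second half, if $t > 0$ I exhibit the preimage directly:
\[
Q_0\bigl(c_{3,2}^{i_2} c_{3,1}^{2n+1} c_{3,0}^{t-1}\bigr) = (2n+1)\, c_{3,2}^{i_2} c_{3,1}^{2n} c_{3,0}^{t} = c_{3,2}^{i_2} c_{3,1}^{2n} c_{3,0}^{t},
\]
showing membership in $\image Q_0$. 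There is no real obstacle here; the only thing to verify carefully is the derivation identity and the table entries invoked, after which all three assertions fall out by direct computation.
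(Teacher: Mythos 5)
Your proof is correct and fills in exactly the routine verification the paper dismisses with ``Straightforward.'' Reading $Q_0 c_{3,2}=0$, $Q_0 c_{3,1}=c_{3,0}$, $Q_0 c_{3,0}=0$ from the table in Proposition~\ref{prop:Sq_invts}, invoking the derivation property, splitting off the $\field[c_{3,2}]$ factor, and computing directly on monomials is the intended (and essentially only sensible) route.
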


\begin{proof}
Straightforward.
\end{proof}

\begin{nota}
 Index the monomial basis of $D_3$  by $I = (i_2,i_1, i_0) \in \nat^{ 3}$ with
\begin{eqnarray}
\label{eqn:monomial_basis}
c^I =  c^{i_2,i_1,i_0} := c_{3,2}^{i_2}c_{3,1}^{i_1}c_{3,0}^{i_0}.
\end{eqnarray}
\end{nota}

\begin{lem}
\label{lem:mil0_reduction}
Let $M$ be an $\cala(0)$-module and write an element $y\in 
\overline{D}_3
\otimes M$ as $y=  \sum c^I \otimes y_I $, where $c^I$ runs over the monomial 
basis (\ref{eqn:monomial_basis}) of $\overline{D}_3$.

Then $y$ lies in $ \ker Q_0$ if and only if both 
\begin{enumerate}
 \item 
$
\mil_0 y_{i_2, 2u, t+1} = y_{i_2, 2u+1, t}
$ for all $(i_2, u, t) \in \nat^{\times 3}$ and 
\item 
$
 \mil_0 y_{i_2, 2u,0} =0. 
$
\end{enumerate}

If these conditions are satisfied,  
\[
 y = \sum c^{i_2, 2u,0} \otimes y_{i_2, 2u,0} +  \sum \mil_0 (c^{i_2 , 2u+1, t} 
\otimes y_{i_2, 2u, t+1}).
\]
\end{lem}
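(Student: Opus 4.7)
The plan is to treat this as a direct bookkeeping exercise based on the derivation property of $Q_0 = Sq^1$ and its explicit action on the Dickson generators recorded in Proposition \ref{prop:Sq_invts}. Since $Q_0$ kills $c_{3,2}$ and $c_{3,0}$ while $Q_0 c_{3,1} = c_{3,0}$, the Leibniz rule applied to $c^I = c_{3,2}^{i_2}c_{3,1}^{i_1}c_{3,0}^{i_0}$ gives the clean dichotomy
\[
Q_0 c^{i_2, i_1, i_0} =
\begin{cases}
0 & i_1 \text{ even},\\
c^{i_2,\, i_1-1,\, i_0+1} & i_1 \text{ odd},
\end{cases}
\]
which is the only computational ingredient needed.

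Next, apply $Q_0$ as a derivation to $y = \sum c^I \otimes y_I$, splitting $i_1 = 2u$ or $i_1 = 2u+1$ and expanding $Q_0(c^I \otimes y_I) = (Q_0 c^I)\otimes y_I + c^I \otimes Q_0 y_I$. Collecting terms in the monomial basis $\{c^I\}$ of $\overline{D}_3$, the coefficient of $c^{i_2, 2u, 0}$ is $Q_0 y_{i_2,2u,0}$, the coefficient of $c^{i_2, 2u, t+1}$ for $t\geq 0$ is $Q_0 y_{i_2,2u,t+1} + y_{i_2, 2u+1, t}$, and the coefficient of $c^{i_2, 2u+1, i_0}$ is $Q_0 y_{i_2, 2u+1, i_0}$. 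Setting $Q_0 y = 0$ and using linear independence of the $c^I$, the three families of conditions are (ii), (i), and the auxiliary identity $Q_0 y_{i_2, 2u+1, i_0} = 0$. The last is automatic from (i), because (i) writes $y_{i_2, 2u+1, i_0} = Q_0 y_{i_2, 2u, i_0+1}$ and $Q_0^2 = 0$; this establishes both directions of the equivalence.

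For the explicit decomposition, partition the sum defining $y$ into three pieces according to the parity of $i_1$ and, in the even case, whether $i_0 = 0$:
\[
y = \sum_{i_2, u} c^{i_2, 2u, 0}\otimes y_{i_2, 2u, 0} + \sum_{i_2, u, t} c^{i_2, 2u, t+1}\otimes y_{i_2, 2u, t+1} + \sum_{i_2, u, t} c^{i_2, 2u+1, t}\otimes y_{i_2, 2u+1, t}.
\]
Substituting $y_{i_2, 2u+1, t} = Q_0 y_{i_2, 2u, t+1}$ from (i) into the last sum and recognizing, via the Leibniz rule and the formula for $Q_0 c^{i_2, 2u+1, t}$, that the last two sums combine into $\sum Q_0\bigl(c^{i_2, 2u+1, t}\otimes y_{i_2, 2u, t+1}\bigr)$ yields the displayed expression.

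There is no real obstacle here beyond careful indexing. The only subtlety worth highlighting is that the family of conditions indexed by odd $i_1$ drops out as a consequence of $Q_0^2 = 0$, which is why the statement lists only (i) and (ii); flagging this avoids the appearance that a third constraint has been suppressed.
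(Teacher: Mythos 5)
Your proof is correct and takes essentially the same approach as the paper: expand $Q_0 y = \sum\{(Q_0 c^I)\otimes y_I + c^I\otimes Q_0 y_I\}$ via the Leibniz rule and identify coefficients in the monomial basis, a computation the paper leaves entirely to the reader. The only point you add beyond what the paper records is the observation that the family of constraints coming from odd $i_1$, namely $Q_0 y_{i_2,2u+1,i_0}=0$, is redundant by $Q_0^2=0$, which is a helpful remark but not a new idea.
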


\begin{proof}
 Clearly $Q_0 y = \sum \{ (\mil_0 c^I) \otimes y_I  + c^I \otimes \mil_0 y_I 
\}$. The result follows by identifying coefficients in the expansion in terms
of  the monomial basis of $D_3$.
\end{proof}

\begin{nota}
Denote by $\overline{\cala}$ the augmentation ideal of $\cala$  so that, for an $\cala$-module $N$, 
$\overline{\cala} N \subset N$ is the kernel of the natural projection $N \twoheadrightarrow \field \otimes_{\cala}N$. 
\end{nota}

 \begin{prop}
 \label{prop:st3_hit}
  For $M$ an unstable module,  $\st_3 M^{>0} \subset \overline{\cala} (P_3
\otimes M)$.  More precisely, 
$
 \st_3 (x) \in \image Sq^1 + \image Sq^{4|x|}   
$
for $x \in M^{>0}$. 
 \end{prop}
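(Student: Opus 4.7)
Plan. The plan is to reduce to the universal case $M = F(n)$ with $x = \iota_n$ by naturality, exploit the multiplicativity $\st_3(\iota_n) = \prod_{j=1}^n V(j)$ where $V(j) = x_j^8 + c_{3,2} x_j^4 + c_{3,1} x_j^2 + c_{3,0} x_j$ (Proposition~\ref{prop:Sq_invts}, using $F(n) \hookrightarrow P_n$), and rewrite each factor in the form $V(j) = B_j^2 + Sq^1(A_j)$.

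For the rewriting, set $p_2 := \sum_i t_i^2 + \sum_{i<j} t_i t_j \in P_3$, $B_j := x_j^4 + p_2 x_j^2$ and $A_j := t_1 t_2 t_3 x_j^4 + c_{3,1} x_j$. The identity reduces to two elementary $P_3$-computations: $Sq^1(c_{3,1} x_j) = c_{3,0} x_j + c_{3,1} x_j^2$ and $c_{3,2} = p_2^2 + Sq^1(t_1 t_2 t_3)$. Expanding,
\[
\st_3(\iota_n) = \Bigl(\prod_j B_j\Bigr)^{2} + \sum_{\emptyset \neq S \subseteq [n]} \prod_{j \in S} Sq^1(A_j) \prod_{j \notin S} B_j^2.
\]
The leading term equals $Sq^{4n}(\prod_j B_j)$ by unstability, since $|\prod B_j| = 4n = 4|x|$, and $\prod_j B_j$ is $\sym_n$-symmetric in the $x_j$, hence lies in $P_3 \otimes F(n)$. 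For $S \neq \emptyset$, picking any $k \in S$ and using $Sq^1 \circ Sq^1 = 0$ together with $Sq^1(B_j^2) = 0$ shows the summand equals $Sq^1\bigl(A_k \prod_{j \in S\setminus\{k\}} Sq^1(A_j) \prod_{j \notin S} B_j^2\bigr)$, hence lies in $\image Sq^1$.

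The main obstacle is that this $Sq^1$-preimage is not manifestly symmetric in the $x_j$, whereas the preimage must lie in $P_3 \otimes F(n)$. I would resolve this by working intrinsically within $D_3 \otimes M$: Lemma~\ref{lem:mil0_reduction} combined with the vanishings $Q_0 \st_3(x) = Q_1 \st_3(x) = 0$ (Lemma~\ref{lem:mil_trivial}) delivers $Sq^1$-preimages automatically in $D_3 \otimes M \subset P_3 \otimes M$ and, together with $[Q_0, Q_1] = 0$, forces the residue modulo $\image Sq^1$ to the form $\st_3(x)_0 + \sum (c_{3,2}^v c_{3,1}^u)^2 \otimes y_{2v, 2u, 0}$ with $Q_0 y_{2v,2u,0} = Q_1 y_{2v,2u,0} = 0$. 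The universal computation above, via naturality of $\st_3$, identifies each $y_{2v, 2u, 0}(x) = Sq^{|z_{v,u}|}(\theta_{v,u} x)$ for some $\theta_{v,u} \in \cala$ (and analogously $\st_3(x)_0 = Sq^{4|x|}(Sq^{2|x|} Sq^{|x|} x)$); the Cartan formula combined with unstability, giving $Sq^{4|x|}(a \otimes b) = Sq^{|a|}(a) \otimes Sq^{|b|}(b)$ when $|a|+|b| = 4|x|$, then packages the whole residue into a single $Sq^{4|x|}$-image, completing the proof.
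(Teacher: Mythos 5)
Your plan splits into two halves, and only the first is carried out concretely; unfortunately, the first half cannot be completed as stated, and the second half (the proposed ``resolution'') has real gaps and does not actually close the argument. Let me be specific.

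The explicit factorisation $V(j) = B_j^2 + Sq^1(A_j)$ is a nice observation, and expanding $\st_3(\iota_n) = \prod_j (B_j^2 + Sq^1 A_j)$ does produce $(\prod_j B_j)^2 = Sq^{4n}(\prod_j B_j)$ plus terms in $\image Sq^1$. But, as you yourself note, the $Sq^1$-preimages $A_k \prod_{j\in S\setminus\{k\}} Sq^1(A_j) \prod_{j\notin S} B_j^2$ live in $P_3 \otimes P_n$, not in $P_3 \otimes F(n)$. The statement to be proved concerns $\overline{\cala}\,(P_3 \otimes M)$, so the preimages must lie in $P_3 \otimes F(n)$. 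There is no obvious symmetrisation in characteristic $2$ that repairs this, and $\image Sq^1|_{P_3\otimes P_n} \cap (P_3 \otimes F(n))$ is not a priori equal to $\image Sq^1|_{P_3 \otimes F(n)}$. So the first half establishes nothing by itself.

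The ``resolution'' cites the right tools (Lemma~\ref{lem:mil_trivial} and Lemma~\ref{lem:mil0_reduction}), which is indeed how the paper proceeds, but then wanders off. Two specific problems: (a)~you assert that each residual class $y_{2v,2u,0}$ has the form $Sq^{|z_{v,u}|}(\theta_{v,u}x)$, i.e.\ is a $Sq_0$-image — nothing in the universal computation yields this, and without it the final ``Cartan packaging'' $Sq^{4|x|}(a\otimes b) = Sq^{|a|}a \otimes Sq^{|b|}b$ does not apply, since it only produces classes $(\;\cdot\;)^2 \otimes (\;\cdot\;)^2$; (b)~you invoke $Q_1$ and $[Q_0,Q_1]=0$ to force $i_2$ even, but Lemma~\ref{lem:mil0_reduction} is a $Q_0$-statement only, so that restriction would need its own (unsupplied) $Q_1$-analogue — and in any case it is unnecessary.

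What you are missing is the decisive and very short step in the paper's proof. After Lemma~\ref{lem:mil0_reduction} reduces $\st_3\iota_n$ modulo $\image Q_0$ (inside $D_3\otimes F(n)$) to $\sum c_{3,2}^{i_2}c_{3,1}^{2u}\otimes y_{i_2,2u}$ with all $y_{i_2,2u}\in\ker Q_0$, one passes to the larger module $P_3\otimes F(n)$. There $\Pbar_3$ is $Q_0$-acyclic (Lemma~\ref{lem:Milnor_observation}\ref{item:MO_1}), so for every $(i_2,u)\neq (0,0)$ the positive-degree coefficient $c_{3,2}^{i_2}c_{3,1}^{2u}\in\ker Q_0\cap\Pbar_3 = \image Q_0|_{\Pbar_3}$ has a preimage $a_{i_2,2u}\in P_3$; then $Q_0(a_{i_2,2u}\otimes y_{i_2,2u}) = c_{3,2}^{i_2}c_{3,1}^{2u}\otimes y_{i_2,2u}$ since $Q_0y_{i_2,2u}=0$. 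Every term except $1\otimes y_{0,0}=1\otimes (Sq_0)^3\iota_n$ is thus absorbed into $\image Q_0$ in $P_3\otimes F(n)$, and $(Sq_0)^3\iota_n = Sq^{4n}(Sq_0)^2\iota_n$, giving the stated refinement $\st_3(x)\in\image Sq^1 + \image Sq^{4|x|}$. Note this uses only $Q_0$: no $Q_1$-argument and no squaring claim about the $y_{i_2,2u}$ is needed.
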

 
 \begin{proof}
As in Lemma \ref{lem:mil_trivial}, one reduces to the universal example 
$\iota_n 
\in F(n)$: it suffices 
to show that $\st_3 \iota_n \in \overline{\cala} (P_3 \otimes F(n))$ for each 
$0<n \in \nat$.

Lemma \ref{lem:mil_trivial} implies that $\st_3 \iota_n \in \ker \mil_0$; 
Lemma \ref{lem:mil0_reduction} shows that it suffices to consider the terms 
  \[
    c_{3,2}^{i_2} c_{3,1} ^{2u} \otimes y_{i_2,2u}
  \]
which appear in the expansion of $\st_3 \iota_n$ (for simplicity, writing 
$y_{i_2, 2u}$ for $y_{i_2, 2u, 0}$). By Lemma 
\ref{lem:mil0_reduction}, both  $y_{i_2, 2u}$ and
$c_{3,2}^{i_2} c_{3,1} ^{2u}$ 
lie in $\ker \mil_0$. 

Clearly $y_{0,0} = (Sq_0)^3 \iota_n \in F(n)$ (recall that $Sq_0 (x) := 
Sq^{|x|}x$); in the other 
cases, $c_{3,2}^{i_2} c_{3,1} ^{2u}$ has positive degree so that there exists 
$a_{i_2,2u} \in P_3$ such that $\mil_0 a_{i_2,2u} = c_{3,2}^{i_2} c_{3,1}
^{2u}$,  
by Lemma \ref{lem:Milnor_observation}. Thus $\mil_0 (a_{i_2,2u} \otimes  
y_{i_2,2u}) =   c_{3,2}^{i_2} c_{3,1} ^{2u} \otimes y_{i_2,2u}$ in $P_3 \otimes 
F(n)$.

This establishes that, for $x \in M^{>0}$, $\st_3 (x) \equiv 1 
\otimes (Sq_0)^3 (x) = (Sq_0)^3 (1 \otimes x) \mod \image Q_0$ in $P_3 \otimes 
M$. 
Since $(Sq_0)^3 (x) =  Sq^{4|x|} (Sq_0)^2(x)$, the final statement follows. 
 \end{proof}

 \begin{cor}
 \label{cor:sts_hit}
  For an integer $s \geq 3$ and  an unstable module $M$, 
$$
\st_s M^{>0} \subset
\overline{\cala} (P_s \otimes 
M). 
$$
 \end{cor}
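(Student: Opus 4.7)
The plan is to bootstrap from the case $s=3$ already established in Proposition \ref{prop:st3_hit}. The key observation is that the recursive definition $\st_s = \st_1 \circ \st_{s-1}$ recalled in Section \ref{sect:background} iterates to give a factorization
\[
 \st_s = \st_3 \circ \st_{s-3},
\]
where $\st_{s-3}$ is applied to $x \in M$ to land in $P_{s-3} \otimes M$, and then $\st_3$ is applied to the resulting element, viewed as living in the unstable module $N := P_{s-3} \otimes M$, to land in $P_3 \otimes N \cong P_s \otimes M$.

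Given this factorization, for $x \in M^{>0}$, the element $y := \st_{s-3}(x)$ has positive degree $2^{s-3}|x|$ in the unstable module $N = P_{s-3} \otimes M$. Applying Proposition \ref{prop:st3_hit} to $N$ and $y \in N^{>0}$ yields
\[
\st_3 (y) \in \overline{\cala} \bigl( P_3 \otimes N \bigr) = \overline{\cala} \bigl( P_s \otimes M \bigr),
\]
so that $\st_s (x) = \st_3 (\st_{s-3} (x))$ lies in $\overline{\cala}(P_s \otimes M)$, which is the desired conclusion.

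The only step requiring care is verifying the factorization $\st_s = \st_3 \circ \st_{s-3}$ and the compatibility of the codomain identification $P_3 \otimes (P_{s-3} \otimes M) \cong P_s \otimes M$ with the inclusion $R_s M \subset D_s \otimes M \subset P_s \otimes M$; but this is built into the recursive construction of $\st_s$ and corresponds precisely to the iterated inclusion $R_s M \hookrightarrow R_3 R_{s-3} M$ already invoked in Proposition \ref{prop:reduction_s=3}. No new obstacle arises, since we do not need a decomposability statement on $\st_{s-3}(x)$ itself—only the mild fact that its degree is positive, which is automatic for $s \geq 3$ and $|x| > 0$.
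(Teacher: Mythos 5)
Your proof is correct and is essentially the reduction argument the paper has in mind (the terse proof in the paper invokes ``a reduction argument similar to the proof of Proposition~\ref{prop:reduction_s=3}'', which is exactly the inclusion $R_s M \hookrightarrow R_3 R_{s-3} M$, i.e.\ the factorization $\st_s = \st_3 \circ \st_{s-3}$ you exploit). The only cosmetic difference is that you apply Proposition~\ref{prop:st3_hit} to $N = P_{s-3} \otimes M$ rather than to $N = R_{s-3} M$; both choices work since the proposition holds for any unstable module and the inclusion $R_{s-3}M \hookrightarrow P_{s-3}\otimes M$ is $\cala$-linear.
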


 \begin{proof}
Use a reduction argument similar to the proof of Proposition 
\ref{prop:reduction_s=3}.
 \end{proof}

\section{Proof of the main theorem}
\label{sect:proofs}

To complete the proof of  Theorem \ref{thm:refined}, by combining Lemma 
\ref{lem:F(n)-reduction} with Proposition \ref{prop:st3_hit}, it suffices 
to show that, for each $n \in \nat$,
\[
\rtilde[3] F(n) \subset \overline{\cala} (P_3 \otimes_{D_3} R_3 F(n)).
\]
 
Recall  from Lemma \ref{lem:induct_include} that $P_3 \otimes_{D_3} R_3 M \cong P_3 \otimes 
\st_3 M$ as a $P_3$-module (forgetting the $\cala$-action). The inclusion 
$\Pbar_3 \hookrightarrow P_3$ 
of $D_3$-modules induces 
\[
 \Pbar_3 \otimes \st_3 M  \cong \Pbar_3 \otimes _{D_3}R_3 M
 \hookrightarrow 
 P_3 \otimes_{D_3} R_3 M \cong P_3 \otimes 
\st_3 M.
\]

\begin{lem}
\label{lem:basic_Milnor_reduction}
For  $M$ an unstable module, there is a natural isomorphism of 
$\cala(1)$-modules 
\[
 P_3 \otimes_{D_3} R_3 M \cong P_3 \otimes 
\st_3 M, 
\]
where $\cala(1)$ acts trivially on $\st_3 M$. 
This  restricts to an isomorphism of $\cala(1)$-modules 
$
\Pbar_3 \otimes_{D_3} R_3 M \cong \Pbar_3 \otimes 
\st_3 M.
$ 

Hence
\[
 \big(\Pbar_3 \otimes \st_3 M \big) \cap \big(\ker Q_0 + \ker Q_1 \big)
\subset \big( \image Sq^1 + \image Sq^2 \big) \subset P_3 \otimes _{D_3} R_3 M.
\]
\end{lem}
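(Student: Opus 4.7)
The plan is to first identify $P_3 \otimes_{D_3} R_3 M$ with $P_3 \otimes \st_3 M$ as $\cala(1)$-modules (with trivial $\cala(1)$-action on $\st_3 M$), and then combine this decoupling with Lemma \ref{lem:Milnor_observation}, applied coordinate-wise in a basis of $\st_3 M$, to deduce the containment. The restriction of the isomorphism to $\Pbar_3$ is then automatic.

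For the $\cala(1)$-module isomorphism, Lemma \ref{lem:induct_include} already supplies the underlying $P_3$-module isomorphism: the inclusion $P_3 \otimes_{D_3} R_3 M \hookrightarrow P_3 \otimes M$ has image free over $P_3$ on $\st_3 M$. The key input needed to upgrade this is Lemma \ref{lem:mil_trivial}, which shows that both generators $Sq^1$ and $Sq^2$ of $\cala(1)$ annihilate every element of $\st_3 M \subset P_3 \otimes M$; hence $\overline{\cala(1)}$ acts trivially on $\st_3 M$. The Cartan formula in the ambient $\cala$-module $P_3 \otimes M$, applied to $p \cdot s$ with $p \in P_3$ and $s \in \st_3 M$, then collapses to $Sq^i(p\cdot s) = Sq^i(p) \cdot s$ for $i\in\{1,2\}$. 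Thus $\cala(1)$ acts diagonally on $P_3 \otimes \st_3 M$ with trivial action on the second factor, and restricting to the $\cala$-stable submodule $\Pbar_3 \otimes \st_3 M$ gives the restricted isomorphism.

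For the containment, fix a basis $\{b_i\}$ of $\st_3 M$ and write any $y \in \Pbar_3 \otimes \st_3 M$ as $y = \sum_i p_i \otimes b_i$ with $p_i \in \Pbar_3$. A decomposition $y = y_0 + y_1$ with $Q_0 y_0 = 0$ and $Q_1 y_1 = 0$ inside $P_3 \otimes \st_3 M$ corresponds, via the diagonal $\cala(1)$-action, to coordinate-wise decompositions $p_i = p_i^{(0)} + p_i^{(1)}$ in $P_3$ satisfying $Q_0 p_i^{(0)} = 0 = Q_1 p_i^{(1)}$. Absorbing the degree-zero constants (which are killed by both $Q_0$ and $Q_1$) between the two summands lets us arrange $p_i^{(0)}, p_i^{(1)} \in \Pbar_3$, so Lemma \ref{lem:Milnor_observation} gives $p_i = Sq^1 q_i + Sq^2 r_i$ for suitable $q_i, r_i \in \Pbar_3$. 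Reassembling, and again invoking the diagonal form of the $\cala(1)$-action, yields
\[
y = Sq^1\!\Bigl(\sum_i q_i \otimes b_i\Bigr) + Sq^2\!\Bigl(\sum_i r_i \otimes b_i\Bigr) \in \image Sq^1 + \image Sq^2 \subset P_3 \otimes_{D_3} R_3 M.
\]

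The argument is essentially formal once the $\cala(1)$-module identification has been pinned down, so there is no real obstacle; the substantive input is already packaged in Lemma \ref{lem:mil_trivial} (which itself rests on Proposition \ref{prop:Sq_invts}). The only minor bookkeeping care is the degree-zero adjustment that ensures the summands $p_i^{(0)}, p_i^{(1)}$ lie in $\Pbar_3$, which is the regime where Lemma \ref{lem:Milnor_observation} applies as stated.
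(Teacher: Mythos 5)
Your argument is correct and follows essentially the same route as the paper: establish the $\cala(1)$-linearity of the $P_3$-module isomorphism $P_3 \otimes \st_3 M \cong P_3 \otimes_{D_3} R_3 M$ via Lemma \ref{lem:mil_trivial}, restrict to $\Pbar_3$, and reduce the containment to Lemma \ref{lem:Milnor_observation} applied coordinate-wise in a basis of $\st_3 M$. The paper's proof is terser (it simply cites Lemma \ref{lem:Milnor_observation} for the final containment), whereas you spell out the coordinate-wise reduction and the minor degree-zero adjustment needed to stay inside $\Pbar_3$, which is a sound and slightly more explicit rendering of the same idea.
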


\begin{proof}
Lemma \ref{lem:mil_trivial} implies that  $\st_3M \subset R_3 M$ is a trivial 
$\cala(1)$-submodule.
The $P_3$-module structure of $ P_3 \otimes_{D_3} R_3 M$ then leads to the 
isomorphism of $P_3$-modules 
\[
 P_3 \otimes \st_3 M \rightarrow  P_3 \otimes_{D_3} R_3 M
\]
which is $\cala(1)$-linear. The case of the restriction to $\Pbar_3$ is 
straightforward.

The final statement follows from Lemma \ref{lem:Milnor_observation}.
 \end{proof}

By Lemma \ref{lem:length_homog}, $\rtilde[3] F(n) \subset R_3 F(n) \subset R_3
P_n$, where $R_3 F(n)$ is the free $D_3$-module on $\st_3 F(n)$ and 
$\st_3 F(n) \subset R_3 P_n$ has a basis given by length homogeneous polynomials in 
$\field [V(1), \ldots , V(n)]$ (cf. Definition \ref{def:length}).

\begin{lem}
\label{lem:generators}
 For  $n$ a non-negative integer, 
 $ \rtilde[3] F(n) \subset R_3 F(n)$ has a basis of elements 
of
the form 
 \begin{eqnarray}
\label{eqn:standard_basis}
 c^I \st_3 y_i
\end{eqnarray}
where $y_i \in F(n)$ ranges over a (degree) homogeneous basis  and $c^I\neq 1$ ranges over the monomial basis of $\overline{D_3}$. In particular, each $\st_3 y_i$ is length 
homogeneous.
\end{lem}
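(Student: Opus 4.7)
The plan is to unwind the freeness of $R_3 F(n)$ as a $D_3$-module and then extract a basis compatible with the decomposition $R_3 F(n) \cong \rtilde[3] F(n) \oplus \st_3 F(n)$ from Lemma~\ref{lem:rtilde:properties}.

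First I would recall the structure obtained in Lemma~\ref{lem:length_homog}: $R_3 F(n)$ sits inside $D_3 \otimes F(n)$ as the free $D_3$-module generated by $\st_3 F(n)$, and $\st_3 F(n) \subset R_3 P_n$ admits a length-homogeneous basis (coming from the identification of $\st_3 F(n)$ with the $\mathfrak{S}_n$-invariants in $\field[V(1),\dots,V(n)]$). Since $\st_3 \colon \Phi^3 F(n) \to \st_3 F(n)$ is a linear isomorphism of graded vector spaces, I can lift such a basis to a basis $\{y_i\}$ of $F(n)$ with the property that each $\st_3 y_i$ is length homogeneous.

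Combining this basis with the monomial $\field$-basis $\{c^I : I \in \nat^3\}$ of $D_3$ from~(\ref{eqn:monomial_basis}), the freeness statement of Lemma~\ref{lem:length_homog} gives the $\field$-basis
\[
\{c^I \st_3 y_i : I \in \nat^3,\ i\}
\]
of $R_3 F(n)$, in which $c^I = 1$ corresponds to $I = (0,0,0)$.

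By Lemma~\ref{lem:rtilde:properties}(i), $\rtilde[3] F(n)$ is the kernel of the canonical surjection $R_3 F(n) \twoheadrightarrow \field \otimes_{D_3} R_3 F(n) \cong \Phi^3 F(n)$; equivalently, it equals $\overline{D}_3 \cdot \st_3 F(n)$ inside $R_3 F(n)$. Under the basis above, this surjection sends $\st_3 y_i \mapsto \st_3 y_i$ and kills every $c^I \st_3 y_i$ with $I \neq 0$. Therefore $\rtilde[3] F(n)$ has the claimed $\field$-basis consisting of the elements $c^I \st_3 y_i$ with $c^I \neq 1$, completing the proof. There is no real obstacle beyond this bookkeeping; the content is already provided by Lemmas~\ref{lem:rtilde:properties} and~\ref{lem:length_homog}.
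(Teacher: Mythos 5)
Your proof is correct and is the intended unwinding of the paper's one-word proof ("Straightforward"): you combine the $D_3$-freeness of $R_3 F(n)$ on a length-homogeneous basis of $\st_3 F(n)$ (Lemma~\ref{lem:length_homog}) with the identification of $\rtilde[3] F(n)$ as $\overline{D}_3 \otimes_{D_3} R_3 F(n) = \overline{D}_3 \cdot \st_3 F(n)$ (Lemma~\ref{lem:rtilde:properties}). This is exactly the bookkeeping the paper leaves to the reader.
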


\begin{proof}
Straightforward.
\end{proof}

\begin{nota}
A {\em term} of an element of $ \rtilde[3] F(n)$ is a basis element (as in 
equation (\ref{eqn:standard_basis}) of  Lemma 
\ref{lem:generators}) which appears with non-zero coefficient. 
\end{nota}

\begin{rem}
Recall from Lemma \ref{lem:induct_include} that there are  inclusions 
\[
R_3 F(n) \hookrightarrow P_3 \otimes _{D_3} R_3 F(n)  \hookrightarrow P_3 \otimes F(n),
\]
 where the right hand terms belong to $P_3 \dash \unst$, the category of $P_3$-modules in $\unst$. This $P_3$-module structure is exploited below. Moreover, since these elements lie within the (commutative) polynomial algebra $P_3 \otimes P_n$, we allow ourselves to write multiplication by elements of $P_3$ on the right. 
\end{rem}

\begin{lem}
\label{lem:full_reduction}
Let $\monom= c^I \st_3 y \in \rtilde[3] F(n)$ where $c^I \neq 1$, $\st_3 y$ is 
length homogeneous 
and let $v \in P_3$.
If $i_1 i_2 \equiv 0\mod 2$ then $\monom v^2 \in \big( \image Sq^1 +
\image Sq^2\big)  \subset P_3 \otimes _{D_3} R_3 F(n)$.  
\end{lem}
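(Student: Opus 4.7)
The plan is to pass through the $\cala(1)$-equivariant isomorphism $P_3 \otimes_{D_3} R_3 F(n) \cong P_3 \otimes \st_3 F(n)$ of Lemma \ref{lem:basic_Milnor_reduction} and apply its final clause. Under this identification $\monom v^2$ corresponds to $(c^I v^2) \otimes \st_3 y$, and since $c^I \neq 1$ has positive degree, this element lies in $\Pbar_3 \otimes \st_3 F(n)$. It therefore suffices to show $\monom v^2 \in \ker Q_0 + \ker Q_1$.

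For this step I would exploit that $\st_3 y$ is $\cala(1)$-trivial by Lemma \ref{lem:mil_trivial}, so $Q_i(c^I v^2 \otimes \st_3 y) = Q_i(c^I v^2)\otimes \st_3 y$ for $i=0,1$. Since $Q_i$ is primitive it acts as a derivation on $P_3$, and $Q_i(v^2) = 2v\,Q_i(v) = 0$ in characteristic two, so $Q_i(c^I v^2) = Q_i(c^I)\cdot v^2$. The problem thus reduces to computing $Q_0 c^I$ and $Q_1 c^I$.

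Using Proposition \ref{prop:Sq_invts} together with the derivation property, a direct calculation gives
\[
Q_0(c_{3,2}^{i_2}c_{3,1}^{i_1}c_{3,0}^{i_0}) = i_1\, c_{3,2}^{i_2}c_{3,1}^{i_1-1}c_{3,0}^{i_0+1},
\qquad
Q_1(c_{3,2}^{i_2}c_{3,1}^{i_1}c_{3,0}^{i_0}) = i_2\, c_{3,2}^{i_2-1}c_{3,1}^{i_1}c_{3,0}^{i_0+1},
\]
where the coefficients are read mod $2$. Hence $Q_0 c^I = 0$ whenever $i_1$ is even and $Q_1 c^I = 0$ whenever $i_2$ is even. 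The hypothesis $i_1 i_2 \equiv 0 \pmod 2$ means at least one of $i_1, i_2$ is even, so $c^I v^2$ lies in $\ker Q_0$ or $\ker Q_1$, and consequently $\monom v^2 \in \ker Q_0 + \ker Q_1$.

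The final step is to invoke the inclusion
\[
\big(\Pbar_3 \otimes \st_3 F(n)\big) \cap \big(\ker Q_0 + \ker Q_1\big) \subset \image Sq^1 + \image Sq^2
\]
from Lemma \ref{lem:basic_Milnor_reduction}, which delivers the conclusion. There is no real obstacle here beyond bookkeeping: the argument is essentially a parity computation once one has transported $\monom v^2$ to $\Pbar_3 \otimes \st_3 F(n)$ and used that $\st_3 y$ is killed by $\cala(1)$; the work done in Lemmas \ref{lem:mil_trivial} and \ref{lem:basic_Milnor_reduction} has already absorbed the tensor-structure subtleties.
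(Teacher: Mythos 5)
Your proof is correct and follows essentially the same route as the paper's: reduce to showing $\monom v^2 \in \ker Q_0 + \ker Q_1$ via the derivation property of $Q_0, Q_1$ and the parity hypothesis on $i_1 i_2$, then invoke the final inclusion of Lemma~\ref{lem:basic_Milnor_reduction}. The paper's version is more terse (it works directly with $\monom = c^I \st_3 y$ in $R_3 P_n$ rather than transporting explicitly to $\Pbar_3 \otimes \st_3 F(n)$, and does not write out the derivation formulas for $Q_i(c^I)$), but the underlying argument is identical.
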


\begin{proof}
 If $i_1 \equiv 0 \mod 2$, then $Q_0 \monom=0$ and if $i_2 \equiv 0 \mod 2$,
then $Q_1 \monom=0$, by Proposition \ref{prop:Sq_invts}. Since $\mil_i$ ($i \in 
\{0,1 \}$) is a derivation, if $\mil_i x=0$, then $\mil_i x v^2 =0$; hence, 
under the hypothesis,  
$\monom v^2 \in \ker \mil_0 +\ker \mil_1$. The result thus follows from Lemma 
\ref{lem:basic_Milnor_reduction}.
\end{proof}

This result is complemented by the following special case of  \cite[Lemma 
B]{HN}.

\begin{lem}
 \label{lem:B}
 The element $c_{3,2} c_{3,1} \in D_3$ lies in $\image Sq^1 +
\image Sq^2 \subset P_3$.
\end{lem}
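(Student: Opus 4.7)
The plan is to exploit the $Sq^1$-acyclicity of $\Pbar_3$ (Lemma~\ref{lem:Milnor_observation}(\ref{item:MO_1})) together with the Cartan formula. Since $Sq^1 c_{3,2} = 0$ by Proposition~\ref{prop:Sq_invts}, acyclicity provides $f \in \Pbar_3$ of degree $3$ with $Sq^1 f = c_{3,2}$. Applying Cartan with $Sq^1 c_{3,1} = c_{3,0}$ gives
\[
Sq^1(c_{3,1} f) \;=\; c_{3,0} f \;+\; c_{3,1} c_{3,2},
\]
whence $c_{3,2} c_{3,1} \equiv c_{3,0} f \pmod{\image Sq^1}$. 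Replacing $f$ by $f + Sq^1 h$ alters $c_{3,0} f$ only by $c_{3,0} Sq^1 h = Sq^1(c_{3,0} h)$ (using $Sq^1 c_{3,0} = 0$), so the class of $c_{3,0} f$ modulo $\image Sq^1$ is canonical.

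The task is therefore reduced to proving $c_{3,0} f \in \image Sq^1 + \image Sq^2 \subset P_3$. By Lemma~\ref{lem:Milnor_observation}(\ref{item:MO_3}) it suffices to arrange, by a suitable choice of representative $f$, that $c_{3,0} f$ lies in $\ker Q_0 + \ker Q_1$. A direct check using $Q_0 c_{3,0} = Q_1 c_{3,0} = 0$ gives $Q_0(c_{3,0} f) = c_{3,0} c_{3,2} \neq 0$ and $Q_1(c_{3,0} f) = c_{3,0}\, Q_1 f$. Since $P_3$ is a domain, the cleanest route is to achieve $Q_1 f = 0$, which would place $c_{3,0} f$ in $\ker Q_1$. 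The freedom $f \mapsto f + Sq^1 h$ modifies $Q_1 f$ by $Q_1 Sq^1 h = Sq^3 Sq^1 h$, and one notes that $Q_1 f$ automatically lies in $\ker Q_0 = \image Q_0$ (from $Q_0 Q_1 = Q_1 Q_0$ and $Q_1 c_{3,2} = 0$); the question thus becomes whether this obstruction lies in $\image(Sq^3 Sq^1)$.

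The main obstacle is this final explicit verification: producing a concrete $f$ satisfying \emph{both} $Sq^1 f = c_{3,2}$ and $Q_1 f = 0$, which requires a hands-on polynomial calculation in $\field[x_1, x_2, x_3]$ using the defining formulas for the Dickson invariants. This computation is precisely what is carried out in \cite[Lemma B]{HN}; since the present lemma is the specialization $s = 3$ of that result, the most efficient approach is to invoke their explicit construction of $f$.
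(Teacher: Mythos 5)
Your bottom line agrees with the paper: the paper gives no proof of this lemma at all, simply introducing it with the remark that it is ``a special case of \cite[Lemma B]{HN}'', and your proposal likewise ends by invoking that explicit computation. On that level you and the paper coincide.

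However, the preparatory sketch you give contains a genuine error, which is worth naming because it shows the route you outline cannot actually be completed (and so the deferral to \cite{HN} is doing all the work, not just the last step). You invoke ``$Q_1 c_{3,2} = 0$'', taken from the prose of Proposition~\ref{prop:Sq_invts}; but this is a typographical slip in the paper. Reading off the table in that same proposition, $Sq^1 c_{3,2} = 0$, $Sq^2 c_{3,2} = c_{3,1}$ and $Sq^3 c_{3,2} = c_{3,0}$, so with $Q_1 = Sq^3 + Sq^2 Sq^1$ one gets
\[
Q_1 c_{3,2} \;=\; Sq^3 c_{3,2} + Sq^2 Sq^1 c_{3,2} \;=\; c_{3,0} \;\neq\; 0,
\]
consistent with the second half of the proposition's own sentence, $Q_1 c_{3,2} = Q_0 c_{3,1} = c_{3,0}$. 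This kills the key step of your sketch: for \emph{any} $f$ with $Sq^1 f = c_{3,2}$ one has $Q_0 Q_1 f = Q_1 Q_0 f = Q_1 c_{3,2} = c_{3,0} \neq 0$, so $Q_1 f$ can never vanish, no matter how you adjust $f$ by $\image Sq^1$. More broadly, a direct check shows $Q_0(c_{3,2}c_{3,1}) = c_{3,2}c_{3,0}$ and $Q_1(c_{3,2}c_{3,1}) = c_{3,0}c_{3,1}$ are both nonzero, and there is no evident way to split $c_{3,2}c_{3,1}$ into a $\ker Q_0$ piece plus a $\ker Q_1$ piece, so Lemma~\ref{lem:Milnor_observation} does not give a shortcut here; the element is hit for a less formal reason, which is precisely the explicit polynomial construction of \cite[Lemma~B]{HN} that the paper (and you, ultimately) cites.
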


Lemma \ref{lem:B} indicates the special role played by the monomial $c_{3,2} c_{3,1}$ and motivates the following decomposition of monomials in $D_3$:

\begin{lem}
\label{lem:Dickson_monomial_decomp}
Let $c^I \in D_3$ be a monomial. Then there is a unique expression 
\[
c^I = 
(c_{3,2}c_{3,1})^{2^{\sigma}-1} (c_{3,2}^{j_2} c_{3,1}^{j_1}) ^{2^{\sigma}}
c_{3,0}^{i_0}, 
\]
where $i_0, j_1,j_2, \sigma \in \nat$ and $j_1 j_2 \equiv 0 \mod 2$. 
\end{lem}

\begin{proof}
For $I=(i_0, i_1, i_2)$, we set
$
\sigma := \min \{ s(i_1), s(i_2)\},
$
where, for $\epsilon \in \{1, 2\}$, $s(i_\epsilon)$ is defined as in \cite[Definition 2.1]{HP95})
 to be the unique non-negative integer such that   
$
i_\epsilon \equiv 2^{s(i_\epsilon)} -1 \mod 2^{s(i_\epsilon)+1}
$ 
. Then the $j_1, j_2$ are determined by $i_\epsilon = (2^{\sigma} -1) + 2^{\sigma} j_\epsilon$ and, by the choice of $\sigma$, at least one of the $j_\epsilon$ is even. 
Conversely, given such an expression, $\sigma$ satisfies the above criterion. 
\end{proof}

Recall from Proposition  \ref{prop:invt_ring_Singer}  that $R_3 P_n$ is a free $D_3$-module on the polynomial algebra $\field [V(j) | 1 \leq j \leq n ]$. Hence a (degree) homogeneous element of $R_3 P_n$ can be written uniquely as
\[
\sum_I c^I p_I 
\]
for monomials $c^I \in D_3$ and $p_I \in \field [V(j) | 1 \leq j \leq n ]$ a homogeneous polynomial (which is thus length homogeneous). 

\begin{rem}
\label{rem:rtilde_Fn}
For an element of $\rtilde[3] F(n) \subset R_3 P_n$, Lemma \ref{lem:generators} implies that, in the above decomposition, $p_I = \st_3 (y_I)$ for a homogeneous element $y_I \in F(n)$.  
\end{rem}

\begin{defn}
\label{defn:fullness}
For $\monom = c^I p_I  \in R_3 P_n$ with $c^I $ a monomial in $D_3$ and $p_I\in \field [V(j) | 1 \leq j \leq n ]$ as above, 
\begin{enumerate}
 \item 
 the fullness of $\monom$, $\full(\monom) \in \nat$, is the integer $\sigma$ associated to $c^I$
 that is  given by Lemma \ref{lem:Dickson_monomial_decomp};
 \item 
 $\monom$ is full if $j_1 =0= j_2$ (i.e. $c^I = (c_{3,2}c_{3,1})^{2^f-1}c_{3,0}^{i_0}$).
\end{enumerate}   
\end{defn}

\begin{rem}
\label{rem:length_inductive_scheme}
Definition \ref{defn:fullness} applies in particular when $\monom = c^I \st_3 (y_I)$ (cf. Remark \ref{rem:rtilde_Fn}). The significance of the notion of fullness is then shown by Lemma \ref{lem:full_reduction}; this is important in the primary  induction of the proof of the main theorem below. A secondary argument uses the  length  inherited from $R_3
P_n$  (see Definition \ref{def:length} and Lemma \ref{lem:length_homog}). 
\begin{enumerate}
\item 
The length argument uses Lemma \ref{lem:exclude} below (which is inspired by 
\cite[Lemma 3.4]{HN2}), which allows monomials which satisfy 
\[
 \lh (\monom) \not \equiv 2^{\full(\monom)}-1 \mod 2^{\full({\monom})}
\]
to be treated by using an induction upon the fullness. 
\item 
The remaining case is studied by using Lemma \ref{lem:parity_case} (which is 
inspired by \cite[Lemma 3.5]{HN2}); this reduces 
to terms of smaller fullness or terms which can be treated by  Lemma 
\ref{lem:exclude}.
\end{enumerate}
\end{rem}

The following elementary observation is used:

\begin{lem}
\label{lem:2-adic_binomial}
Let  $f$ and $\ell$ be non-negative integers. Then $\ell \equiv 2^{f}-1 \mod  
2^{f}$ if and only if
  \[
  \binom {\ell}{2^i} \equiv 1 \mod 2,
 \]
for each  $0 \leq  i < f $.
\end{lem}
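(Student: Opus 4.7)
The plan is to reduce to a binary-digit statement via Lucas's theorem. Write $\ell$ in binary as $\ell = \sum_{j \geq 0} \ell_j 2^j$ with $\ell_j \in \{0,1\}$. The condition $\ell \equiv 2^f - 1 \pmod{2^f}$ says exactly that the lowest $f$ binary digits of $\ell$ are all equal to $1$, i.e.\ $\ell_0 = \ell_1 = \cdots = \ell_{f-1} = 1$. So the task is to relate the parity of $\binom{\ell}{2^i}$ to the $i$-th binary digit $\ell_i$.

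First I would invoke Lucas's theorem: for any non-negative integers $m,n$ with binary expansions $m = \sum m_j 2^j$ and $n = \sum n_j 2^j$, one has $\binom{m}{n} \equiv \prod_j \binom{m_j}{n_j} \pmod{2}$. Applied with $n = 2^i$, whose binary expansion has a single $1$ in position $i$ and $0$ elsewhere, this collapses the product to the single factor in position $i$, giving
\[
 \binom{\ell}{2^i} \equiv \binom{\ell_i}{1} \prod_{j \neq i} \binom{\ell_j}{0} \equiv \ell_i \pmod{2}.
\]
Hence $\binom{\ell}{2^i} \equiv 1 \pmod 2$ if and only if $\ell_i = 1$.

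Combining these two observations finishes the argument: the family of congruences $\binom{\ell}{2^i} \equiv 1 \pmod 2$ for $0 \leq i < f$ is equivalent to $\ell_0 = \cdots = \ell_{f-1} = 1$, which in turn is equivalent to $\ell \equiv 2^f - 1 \pmod{2^f}$. There is no real obstacle; the only care needed is to handle the boundary case $f = 0$, where both sides of the equivalence are vacuously true (the set of indices $0 \leq i < f$ is empty, and the congruence modulo $2^0 = 1$ is trivial). If one prefers to avoid citing Lucas's theorem, the identity $\binom{\ell}{2^i} \equiv \ell_i \pmod 2$ can alternatively be obtained from Kummer's theorem on the $2$-adic valuation of binomial coefficients, or by a direct induction on $\ell$ using Pascal's rule.
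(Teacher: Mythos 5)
Your proof is correct. The paper states this lemma as an ``elementary observation'' without supplying a proof, and the argument you give via Lucas's theorem (reducing $\binom{\ell}{2^i} \bmod 2$ to the $i$-th binary digit $\ell_i$, then observing that $\ell \equiv 2^f-1 \pmod{2^f}$ is exactly the condition $\ell_0=\cdots=\ell_{f-1}=1$) is the standard and surely intended one, including the vacuous $f=0$ case.
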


\begin{lem}
\label{lem:exclude}
 For $\monom = c^I \st_3 y \in \rtilde[3] F(n)$ (where $\st_3 y$ is length 
homogeneous) of length $\lh (\monom)$, and $\full
(\monom)= f>0$, suppose that, for some $0 \leq i < f$, 
 \[
  \binom {\lh (\monom)}{2^i} \equiv 0 \mod 2. 
 \]
Then, $\monom c_{3,2}^{-2^i} \in \rtilde[3]F(n)$ and, for any element $v \in
P_3$, there is an equality in $P_3 
\otimes_{D_3} R_3 F(n)$:
\[
 Sq^{4\cdot 2^i} (\monom c_{3,2}^{-2^i} v ^{2^{f}})     
 = 
 \monom v ^{2^{f}}
 + 
 \sum_l \mathfrak{s}_l v_l ^{2^{f}} 
\]
where $v_l \in P_3$ and $\mathfrak{s}_l \in R_3 F(n)$ is an element of the form
$c^{I_l} \st_3 (y_l)$ such that 
\begin{enumerate}
  \item 
$\mathfrak{s}_l \in \rtilde[3] F(n)$;
\item 
 $\full(\mathfrak{s}_l)\leq i<f $.
\end{enumerate}
\end{lem}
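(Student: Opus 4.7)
The plan is to compute $Sq^{2^{i+2}}(a v^{2^f})$ directly where $a := \monom c_{3,2}^{-2^i}$, and to analyse the resulting summands via Cartan. First, from the standard form the $c_{3,2}$-exponent of $\monom$ is $2^f - 1 + 2^f j_2 \geq 2^f - 1 \geq 2^i$ (since $i < f$), so division by $c_{3,2}^{2^i}$ is well-defined; its $c_{3,1}$-exponent $2^f-1\geq 1$ remains positive, so $a \in \rtilde[3] F(n)$. Using $Sq^q(v^{2^f}) = 0$ for $2^f \nmid q$ and $Sq^{2^f m}(v^{2^f}) = (Sq^m v)^{2^f}$, the Cartan formula gives
\[
Sq^{2^{i+2}}(av^{2^f}) \;=\; \sum_{m \geq 0,\ 2^f m \leq 2^{i+2}} Sq^{2^{i+2} - 2^f m}(a) \cdot (Sq^m v)^{2^f},
\]
and each $Sq^p(a)$ (with $p := 2^{i+2} - 2^f m$) is expanded by Cartan over $a$ viewed as a product of its $L := \lh(a) = \lh(\monom) - 2^i$ generator factors, producing basis monomials $c^{I'}\st_3 y'$ of $R_3 F(n)$.

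To isolate the coefficient of $\monom v^{2^f}$, note from Proposition~\ref{prop:Sq_invts} that the only Steenrod operation sending any generator $g \in \{c_{3,2}, c_{3,1}, c_{3,0}, V(k)\}$ to $g \cdot c_{3,2}$ is $Sq^4$. Hence $\monom = a \cdot c_{3,2}^{2^i}$ arises in $Sq^{2^{i+2}}(a)$ only when $Sq^4$ is applied to exactly $2^i$ of the $L$ factors and $Sq^0$ to the rest, with coefficient $\binom{L}{2^i}$. By Lemma~\ref{lem:2-adic_binomial}, the hypothesis $\binom{\lh(\monom)}{2^i} \equiv 0 \pmod 2$ is equivalent to bit $i$ of $\lh(\monom) = L + 2^i$ being zero, hence to bit $i$ of $L$ being one, so $\binom{L}{2^i} \equiv 1 \pmod 2$ and $\monom v^{2^f}$ appears with coefficient exactly $1$.

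It remains to show every other basis summand $\mathfrak{s} = c_{3,2}^{\alpha'} c_{3,1}^{\beta'} c_{3,0}^{i_0'} \st_3 y'$ of some $Sq^p(a)$ satisfies $\full(\mathfrak{s}) \leq i$ and lies in $\rtilde[3] F(n)$. Since $\nu_2(\alpha+1) = i$ and $\nu_2(\beta+1) \geq f > i$ for $a$, the condition $\full(\mathfrak{s}) \geq i+1$ forces $\Delta_{c_{3,2}} \equiv 2^i \pmod{2^{i+1}}$ and $\Delta_{c_{3,1}} \equiv 0 \pmod{2^{i+1}}$. Any configuration employing $n_2$ copies of $Sq^2 c_{3,2}$ and $n_3$ copies of $Sq^3 c_{3,2}$ contributes a Cartan multinomial factor $\binom{\alpha}{n_2+n_3}$; since $\alpha = 2^f(j_2+1) - 1 - 2^i$ has bit $i$ equal to zero, Lucas's theorem (Lemma~\ref{lem:2-adic_binomial}) yields $\binom{\alpha}{n_2+n_3}\equiv 0\pmod 2$ whenever bit $i$ of $n_2+n_3$ is one. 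Combined with the degree bound $2(n_2+n_3) + 4(n_4+n_5) \leq p \leq 2^{i+2}$, this forces bit $i$ of $n_2+n_3$ to be zero for non-vanishing contributions, whence bit $i$ of $n_4+n_5$ must be one, requiring $n_4+n_5 \geq 2^i$ and hence $p \geq 2^{i+2}$. Thus for $p < 2^{i+2}$ no summand reaches $\full \geq i+1$, and for $p = 2^{i+2}$ only $\monom$ does. Non-triviality of the $D_3$-part is automatic since every elementary Steenrod operation in Proposition~\ref{prop:Sq_invts} weakly preserves the total $c$-count of $a$. The main obstacle is this tight parity-and-degree case analysis, with Lucas applied to $\binom{\alpha}{n_2+n_3}$ as the key mechanism forbidding configurations that would otherwise compete with $\monom$.
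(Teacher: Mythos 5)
Your proof is correct and follows essentially the same route as the paper's (quite terse) argument: expand $Sq^{2^{i+2}}(\monom c_{3,2}^{-2^i} v^{2^f})$ by Cartan, identify $\monom v^{2^f}$ as the unique high-fullness contribution, show its coefficient is $\binom{\lh(a)}{2^i}\equiv 1$ because $\binom{\lh(\monom)}{2^i}\equiv 0$, and check that everything else has fullness $\leq i$. What you add over the paper's sketch is the explicit Lucas bookkeeping (bit $i$ of $\lh(\monom)$ zero $\Leftrightarrow$ bit $i$ of $\lh(a)=\lh(\monom)-2^i$ one), the translation of $\full(\mathfrak{s})\geq i{+}1$ into the $2$-adic constraints $\Delta_{c_{3,2}}\equiv 2^i$ and $\Delta_{c_{3,1}}\equiv 0 \pmod{2^{i+1}}$ via $\nu_2(\alpha+1)=i$, $\nu_2(\beta+1)\geq f$, and the degree-budget argument forcing $n_4+n_5=2^i$, $n_5=0$, everything else $0$ when $p=2^{i+2}$. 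These fill real gaps in the paper's one-line ``can only have fullness $>i$ by the application of $Sq^4$ to $2^i$ generators.'' Two small points worth tightening: (1) the early ``hence $\monom$ arises only when $Sq^4$ is applied to exactly $2^i$ factors'' is not immediate from the observation that $Sq^4$ is the unique operation sending $g\mapsto g c_{3,2}$ --- one must also rule out compositions that change and then restore the $c_{3,1},c_{3,0},V$ multi-degree; this is in fact a consequence of your subsequent $\Delta$-and-degree analysis, so you should signpost that rather than assert it as a standalone step; (2) in deducing that bit $i$ of $n_4+n_5$ is one you are tacitly invoking the congruence $\Delta_{c_{3,2}}=(n_4+n_5)-(n_2+n_3)\equiv 2^i\pmod{2^{i+1}}$, which you stated but should cite at that point. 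Neither affects correctness.
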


\begin{proof}
The fact that $\monom c_{3,2}^{-2^i} \in \rtilde[3]F(n)$ is clear. 
 The remainder of the argument uses the action of the Steenrod algebra given by Proposition \ref{prop:Sq_invts}, together with the Cartan 
formula, which gives 
\begin{eqnarray}
\label{eqn:cartan_monom}
Sq^{4\cdot 2^i} (\monom c_{3,2}^{-2^i} v ^{2^{f}})   
= 
\sum_{n'+n'' = 4 \cdot 2^i}
Sq^{n'} (\monom c_{3,2}^{-2^i}) Sq^{n''} (v^{2^f}).
\end{eqnarray}
The term $Sq^{n''} (v^{2^f})$ vanishes unless $n'' \cong 0 \mod 2^f$, say $n'' = a 2^f$,  in which case $Sq^{a 2^f} (v^{2^f}) = (Sq^a v )^{2^f}$. This means that 
$Sq^{4\cdot 2^i} (\monom c_{3,2}^{-2^i} v ^{2^{f}})$ can be written as a sum of the form $\sum_j \monom_j (v_j)^{2^f}$, where $\monom_j$ is a term of $\rtilde[3] F(n) \subset R_3 P_n$ and $v_j \in P_3$. It remains to consider the fullness of the  $\monom_j$. 

The analysis proceeds by considering the exponent of $c_{3,2}$ in the monomials  $Sq^{n'}(\monom c_{3,2}^{-2^i})$.  By Proposition \ref{prop:Sq_invts}, $Sq^4$ acts on the algebra generators of $R_3 P_n$ by multiplication by $c_{3,2}$ and $Sq^5 c_{3,1} = c_{3,0}c_{3,2}$. No other action of a Steenrod square on the algebra generators introduces new occurrences of $c_{3,2}$;  in particular, this is the case for the operations $Sq^i$, $0\leq i \leq 3$.  
 
For a term appearing in the expansion of $Sq^{4\cdot 2^i} (\monom 
c_{3,2}^{-2^i} v ^{2^{f}}) $ to have fullness greater than $i$, the action of $Sq^{n'}$ on 
 $\monom c_{3,2}^{-2^i}$ must have created $2^i$ additional factors $c_{3,2}$. Expanding using the Cartan formula, the above analysis shows that this is only possible by application of $Sq^4$ to $2^i$ algebra generators of $R_3 P_n$. In particular, such a term must  arise from the expression  $Sq^{4\cdot 2^i} (\monom 
c_{3,2}^{-2^i}) v ^{2^{f}}$
 of equation (\ref{eqn:cartan_monom}). The hypothesis on the binomial coefficient ensures that this contributes  $ \monom v ^{2^{f}}$. 

The remaining terms have fullness $\leq i$ and can be written in the required
form $ \mathfrak{s}_l v_l ^{2^{f}}$, indexing as in  the statement of the Lemma.  The fact that  $\mathfrak{s}_l\in 
\rtilde[3] F(n)$ is clear, since it is impossible to
destroy a contribution from $\overline{D}_3$.
 \end{proof}

\begin{lem}
\label{lem:parity_case}
 Let $\monom = c^I \st_3 y  \in \rtilde[3] F(n)$ with $\st_3 y$ length 
homogeneous and $\monom$ full with   $\full(\monom) = f$. 

Suppose that $\lh (\monom) \equiv 2^{f}-1 \mod 2^{f}$ and consider a Steenrod
operation $\theta \in \cala$ such that $|\theta|\leq 2^{f+1}$. 
 
 For $\mathfrak{t}$ a term of $\theta \monom$, 
\begin{enumerate}
 \item 
 if $\lh(\mathfrak{t})=\lh (\monom)$, then  $\full(\mathfrak{t}) < f$;
 \item 
 if $\lh(\mathfrak{t})> \lh (\monom)$, then $\lh (\mathfrak{t})\not  \equiv 2^f 
-1  \mod 2^f$. 
\end{enumerate}

In particular, one of the following holds:
\begin{enumerate}
  \item 
 $ \full( \mathfrak{t}) < f$;
 \item 
 $ \full( \mathfrak{t}) \geq  f$ and $\lh (\mathfrak{t})\not  \equiv 
2^{\full(\mathfrak{t})} -1  \mod 2^{\full(\mathfrak{t})}$.
 \end{enumerate}
\end{lem}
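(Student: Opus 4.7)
The plan is to view $\monom$ as a product of $\ell = \lh(\monom)$ single generators---the $2^f-1$ copies of $c_{3,2}$, the $2^f-1$ copies of $c_{3,1}$, the $i_0$ copies of $c_{3,0}$, and the $V(j)$ factors constituting $\st_3 y$---apply the iterated Cartan formula to expand $\theta\monom = \sum \prod_i \theta_i(g_i)$, and analyse each basis monomial $\mathfrak{t}$ that arises by tracking its length and its exponents of $c_{3,2}$ and $c_{3,1}$. Throughout one may assume $|\theta| > 0$, since for $\theta = Sq^0$ the unique term is $\mathfrak{t} = \monom$, where neither conclusion applies.

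The central estimate will be the length bound $\lh(\theta'(g)) \leq 1 + |\theta'|/4$ for every $\theta' \in \cala$ and every generator $g \in \{c_{3,2},c_{3,1},c_{3,0},V(j)\}$ of $R_3 P_n$. For a single square $\theta' = Sq^k$ this follows directly from Proposition~\ref{prop:Sq_invts} (every nonzero entry of the table has length at most $2$) together with the instability relation $Sq^k g = 0$ for $k > |g|$; the general case is obtained by induction on the length of an admissible factorisation of $\theta'$, using the Cartan formula and the per-monomial bound $\lh(Sq^k m) \leq \lh(m) + k/4$. Summing over a Cartan decomposition of $\theta$ applied to $\monom$ then yields
\[
\lh(\mathfrak{t}) - \lh(\monom) \leq |\theta|/4 \leq 2^{f-1} < 2^f,
\]
so assertion (ii) is immediate: a strictly positive increase of at most $2^{f-1}$ cannot be congruent to $0$ modulo $2^f$, and hence $\lh(\mathfrak{t}) \not\equiv 2^f - 1 \bmod 2^f$.

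For assertion (i), the constraint $\lh(\mathfrak{t}) = \lh(\monom)$ forces each $\theta_i(g_i)$ in a contributing Cartan decomposition to reduce to a single generator. Proposition~\ref{prop:Sq_invts}, the degree constraint, and the facts that $D_3$ is Steenrod-closed and that every nonzero $Sq^k V(j)$ retains $V(j)$ as a factor together restrict the length-preserving operations to $\{Sq^0, Sq^2, Sq^3\}$ on $c_{3,2}$ (producing $c_{3,2}, c_{3,1}, c_{3,0}$ respectively), $\{Sq^0, Sq^1\}$ on $c_{3,1}$ (producing $c_{3,1}, c_{3,0}$), and $Sq^0$ on $c_{3,0}$ and each $V(j)$. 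Letting $a$, $b$, $c$ count the numbers of transitions $c_{3,2} \to c_{3,1}$, $c_{3,2} \to c_{3,0}$, and $c_{3,1} \to c_{3,0}$ respectively, the degree balance $2a + 3b + c = |\theta| > 0$ forces $(a,b,c) \neq (0,0,0)$, and the exponents of $\mathfrak{t}$ become $i_2' = 2^f - 1 - (a + b)$, $i_1' = 2^f - 1 + (a - c)$, $i_0' = i_0 + b + c$, with $\st_3 y$ unchanged. A short $2$-adic valuation argument, using $0 \leq a+b \leq 2^f-1$ and $|a-c| < 2^f$, then shows that in every such nontrivial configuration at least one of $v_2(i_2' + 1)$ or $v_2(i_1' + 1)$ is strictly less than $f$; hence $\full(\mathfrak{t}) < f$.

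The ``in particular'' clause follows by combining (i) and (ii): if $\full(\mathfrak{t}) = f' \geq f$ and $\lh(\mathfrak{t}) \equiv 2^{f'} - 1 \bmod 2^{f'}$, then $2^f \mid 2^{f'}$ forces $\lh(\mathfrak{t}) \equiv 2^f - 1 \bmod 2^f$, contradicting (i) in the length-preserving case (where $\full < f$) and (ii) otherwise. The main technical obstacle will be establishing the length bound uniformly across the four generator types: the naive degree estimate $(|g| + |\theta'|)/4$ is strictly weaker than $1 + |\theta'|/4$ for $|g| > 4$, so the argument must rest on the per-generator observation from Proposition~\ref{prop:Sq_invts} that a single Steenrod square produces an output of length at most $2$, and then propagate this through the Cartan formula.
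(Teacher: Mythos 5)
Your proof is correct and follows the same two-track argument as the paper: for (ii), bound the length increase by $|\theta|/4 \leq 2^{f-1}$ using the table in Proposition~\ref{prop:Sq_invts} and finish by arithmetic modulo $2^f$; for (i), observe that length preservation confines the nontrivial action to the factor $(c_{3,2}c_{3,1})^{2^f-1}$ and that the resulting exponent shifts force the fullness below $f$. Your write-up simply makes explicit (via the $(a,b,c)$ transition count and the $2$-adic valuation description of fullness as $\min\big(v_2(i_2+1),v_2(i_1+1)\big)$) what the paper dispatches with ``by inspection.''
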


\begin{proof}
The hypothesis  that $\monom$ is full of fullness $f$ means that  $$\monom = 
(c_{3,2}c_{3,1})^{2^{f}-1} c_{3,0}^{i_0} \st_3 y.$$

In the case $\lh (\mathfrak{t})= \lh (\monom)$, $\mathfrak{t}$ arises from the
action of the operation $\theta$  upon the factor $(c_{3,2}c_{3,1})^{2^{f}-1}$ 
of $\monom$ since
Steenrod operations increase the length when operating non-trivially on the 
other 
terms (this is where fullness is used). It follows by inspection 
that 
 $\full(\mathfrak{t}) < f$.

In the remaining case, write $\lh (\monom) = 2^{f}-1 + k 2^f$, for some $k \in 
\nat$. Then, by inspection of the action given by Proposition 
\ref{prop:Sq_invts}, 
\[
 \lh (\monom) = 2^{f}-1 + k 2^{f} < \lh (\mathfrak{t}) \leq  2^{f}-1 + k 2^{f}
+ 
|\theta|/4 \leq  2^{f}-1 + k 2^{f} + 2^{f-1}.
\]
The conclusion follows by elementary arithmetic.
\end{proof}

Theorem \ref{thm:refined} is implied by the following more precise 
result. The main case of interest is when $v=1$ in the statement;    the 
inductive proof requires the  stronger statement.

\begin{thm}
 \label{thm:hyper_refined}
 Let $\monom = c^I \st_3 y \in \rtilde[3] F(n)$ be a basis element with  $\full 
(\monom)= f$ and let $v \in P_3$. 
 Then  
 \begin{enumerate}
  \item 
  if $\monom$ is full (hence $f>0$)
  \[
   \monom v^{2^{f+1}} \in \overline{\cala (f)}  \big(P_3 \otimes_{D_3} R_3 
F(n)\big);
  \]
\item 
otherwise 
\[
   \monom v^{2^{f+1}} \in \overline{\cala (f+1)} \big(P_3 \otimes_{D_3} R_3 
F(n)\big).
  \]
 \end{enumerate}
\end{thm}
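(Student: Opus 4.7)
The plan is a double induction, primary on the fullness $f = \full(\monom)$ and secondary on the length $\lh(\monom)$. For the base case $f = 0$, the standard form is $\monom = c_{3,2}^{j_2} c_{3,1}^{j_1} c_{3,0}^{i_0}\st_3 y$ with $j_1 j_2 \equiv 0 \pmod 2$, so Lemma \ref{lem:full_reduction} applies directly to give $\monom v^2 \in \image Sq^1 + \image Sq^2 \subset \overline{\cala(1)}\bigl(P_3 \otimes_{D_3} R_3 F(n)\bigr)$, which is conclusion (2); (1) is vacuous when $f = 0$.

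For the inductive step with $f \geq 1$, I would split on whether $\lh(\monom) \equiv 2^f - 1 \pmod{2^f}$. In the non-congruent case, Lemma \ref{lem:2-adic_binomial} supplies a minimal $0 \leq i < f$ with $\binom{\lh(\monom)}{2^i} \equiv 0 \pmod 2$, and Lemma \ref{lem:exclude} (applied with $v^2$ in place of its $v$) produces
\[
 \monom v^{2^{f+1}} = Sq^{2^{i+2}}\bigl(\monom c_{3,2}^{-2^i} v^{2^{f+1}}\bigr) + \sum_l \mathfrak{s}_l \widetilde v_l^{2^{f+1}},
\]
with $\full(\mathfrak{s}_l) \leq i < f$. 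Each error term can be regrouped as $\mathfrak{s}_l \cdot \bigl(\widetilde v_l^{2^{f-\full(\mathfrak{s}_l)}}\bigr)^{2^{\full(\mathfrak{s}_l)+1}}$, whereupon the primary induction deposits it in $\overline{\cala(\full(\mathfrak{s}_l)+1)} \subset \overline{\cala(f)}$. The leading Steenrod term sits in $\overline{\cala(i+2)}$, which is inside $\overline{\cala(f)}$ precisely when $i \leq f-2$ (yielding conclusion (1)) and otherwise only inside $\overline{\cala(f+1)}$, which still gives (2) as required in the non-full subcase.

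The congruent case $\lh(\monom) \equiv 2^f - 1 \pmod{2^f}$ is handled via Lemma \ref{lem:parity_case} when $\monom$ is full: a suitable $\theta \in \overline{\cala(f)}$ of degree at most $2^{f+1}$ expresses $\theta \monom$ as a sum of terms $\mathfrak{t}$ that either have strictly smaller fullness (handled by primary induction, after converting $v^{2^{f+1}}$ to a $2^{\full(\mathfrak{t})+1}$-th power) or keep the same fullness but with $\lh(\mathfrak{t}) \not\equiv 2^{\full(\mathfrak{t})} - 1 \pmod{2^{\full(\mathfrak{t})}}$, so the non-congruent subcase closes the loop. A not-full $\monom$ in the congruent case is first pre-reduced: absorbing $2^{f+1}$-th power factors from $c_{3,2}^{j_2}$ or $c_{3,1}^{j_1}$ into $v$ strictly lowers length at constant fullness, so the secondary induction applies; the residual irreducible case $(j_1, j_2) \in \{(1,0), (0,1)\}$ is dispatched by exploiting Lemma \ref{lem:B} to trade the single $c_{3,2}c_{3,1}$ factor for Steenrod image terms and then invoking Lemma \ref{lem:full_reduction} on the remaining factor. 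The main obstacle I expect is the $\overline{\cala(f)}$-versus-$\overline{\cala(f+1)}$ dichotomy in the conclusion: applying Lemma \ref{lem:exclude} with $i = f-1$ produces $Sq^{2^{f+1}} \notin \cala(f)$, so the full conclusion (1) must avoid that route entirely and flow through Lemma \ref{lem:parity_case}, whose entire point is to keep $\theta$ inside $\overline{\cala(f)}$. Verifying that this routing is consistent, and that the secondary length induction terminates in every residual congruent non-full case, is the delicate heart of the argument.
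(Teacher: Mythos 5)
Your broad architecture matches the paper's: primary induction on fullness, a length-congruence dichotomy governed by Lemmas \ref{lem:exclude} and \ref{lem:parity_case}, and Lemma \ref{lem:B} to handle the factor $c_{3,2}c_{3,1}$. But there is a genuine gap in how you propose to obtain the sharper conclusion (1) for a full $\monom$ of fullness $f$, and you have in fact spotted the symptom yourself without resolving it. In the non-congruent case you apply Lemma \ref{lem:exclude} directly to $\monom v^{2^{f+1}}$; this forces use of $Sq^{2^{i+2}}$ for whatever $i<f$ satisfies $\binom{\lh(\monom)}{2^i}\equiv 0 \pmod 2$, and nothing prevents the smallest such $i$ from being $f-1$, which yields $Sq^{2^{f+1}}\notin\cala(f)$. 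Your remedy --- ``the full conclusion (1) must avoid that route entirely and flow through Lemma \ref{lem:parity_case}'' --- is not available: Lemma \ref{lem:parity_case} requires $\lh(\monom)\equiv 2^f-1\pmod{2^f}$, whereas the offending situation is precisely $\lh(\monom)\not\equiv 2^f-1\pmod{2^f}$, and full monomials with that length behaviour do occur (e.g.\ $\monom=(c_{3,2}c_{3,1})^{2^f-1}c_{3,0}^{2^{f-1}+1}$ gives $\lh(\monom)\equiv 2^{f-1}-1\pmod{2^f}$, so only $i=f-1$ is usable). The residual ``pre-reduced'' non-full congruent case is also described too loosely to see how it closes, since absorbing even parts of $(j_1,j_2)$ into $v^{2^{f+1}}$ can convert a non-full monomial into a full one of the same fullness, dumping you back into the unresolved branch.

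The paper avoids this trap by a different organisation of the inductive step. It never attempts to prove (1) for fullness $f$ directly from Lemmas \ref{lem:exclude}/\ref{lem:parity_case}. Instead it pairs the non-full case at fullness $f$ (target $\overline{\cala(f+1)}$) with the full case at fullness $f+1$ (also target $\overline{\cala(f+1)}$). For the full case, one first reduces to $v=1$ and uses Lemma \ref{lem:B} plus the Cartan formula to peel off $(c_{3,2}c_{3,1})^{2^f}\in \image Sq^{2^f}+\image Sq^{2^{f+1}}$; for the non-full case, Lemma \ref{lem:full_reduction} plays the analogous role for $(c_{3,2}^{j_2}c_{3,1}^{j_1}v^2)^{2^f}$. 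Both land on the common expression $(Sq^{2^f}u_1^{2^f}+Sq^{2^{f+1}}u_2^{2^f})\,\monom_{\rmfull}$ with $\full(\monom_{\rmfull})=f$ and the softer target $\image(Sq^1,\dots,Sq^{2^{f+1}})$. Only after this reduction are Lemmas \ref{lem:exclude} and \ref{lem:parity_case} invoked; in that setting $Sq^{2^{i+2}}$ with $i\leq f-1$ is always admissible since $i+2\leq f+1$, and the error terms drop in fullness. In short, the $\overline{\cala(f)}$-versus-$\overline{\cala(f+1)}$ bookkeeping is settled by the initial factoring via Lemma \ref{lem:B}/\ref{lem:full_reduction}, not by the subsequent Steenrod-operation manipulations, and this reorganisation is precisely the ingredient your proposal is missing.
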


\begin{proof}
 The result is proved by increasing induction upon $f$. 

The initial case of the induction for  $f=0$ (only the non-full case occurs), 
is 
established by Lemma \ref{lem:full_reduction} since, 
 by definition of fullness, the hypothesis $i_1 i_2 \equiv 0 \mod 2$ is 
satisfied. 

The initial case of the induction for the full case is for $f=1$; here one can 
write:
\[
 \monom v^4 = c_{3,2} c_{3,1} \monom' v^4
\]
and $Sq^1, Sq^2$ act trivially on $\monom' v^4$ (since $\monom'$ contains 
neither $c_{3,2}$ nor $c_{3,1}$). The result 
therefore follows from Lemma \ref{lem:B} and the Cartan formula. 

There are two inductive steps to consider. In order to present a unified proof 
of the final step, the 
induction first treats the non-full case with $\full (\monom)= f$ and then the 
full case with $\full (\monom)= f+1$.
\begin{enumerate}
 \item 
In the non-full case with $\full (\monom)= f$, one can write 
\[
 \monom v^{2^{f+1}} = (c_{3,2}^{j_2}c_{3,1}^{j_1} v^2)^{2^{f}} 
\monom_{\mathrm{full}} 
\]
where $\monom_{\mathrm{full}}$ is full with $\full (\monom_{\mathrm{full}}) = 
f$,  $j_1 + j_2 >0$ and $j_1j_2 \equiv 0 \mod 2$. Lemma \ref{lem:full_reduction}
 implies that $c_{3,2}^{j_2}c_{3,1}^{j_1} v^2 \in \big( \image Sq^1 + \image 
Sq^2 \big) \subset \Pbar_3$, say is $Sq^1 u_1 + Sq^2 u_2$, so that 
$ (c_{3,2}^{j_2}c_{3,1}^{j_1} v^2)^{2^{f}} = Sq^{2^f} u_1^{2^f} + Sq^{2^{f+1}} 
u_2 ^{2^f} $.

Hence it is sufficient to consider elements of the form 
\[
 (Sq^{2^f} u_1^{2^f} + Sq^{2^{f+1}} u_2 ^{2^f} ) \monom_{\mathrm{full}} , 
\]
where $u_1, u_2 \in \Pbar_3$. For such elements, see below.
\item 
In the full case with $\full (\monom)= f+1$, first observe that it suffices to 
prove the case for $v=1$, since
if $\monom \in \image ( Sq^1, \ldots , Sq^{2^{f+1}} )$,  the Cartan formula 
shows that 
$\monom v^{2^{f+2}}$ is also. 

Now write 
\[
 \monom = (c_{3,2}c_{3,1})^{2^{f}} \monom_{\mathrm{full}} 
\]
where $\monom_{\mathrm{full}}$ is full with $\full (\monom_{\mathrm{full}}) = 
f$. By Lemma \ref{lem:B}, 
$c_{3,2}c_{3,1}$, considered as an element of $P_3$,  lies in $\image Sq^1 + 
\image Sq^2$. Hence 
$(c_{3,2}c_{3,1})^{2^f}$ lies in   $\image Sq^{2^f} + \image Sq^{2^{f+1}}$, thus
 it is sufficient to consider elements of the form:
\[
 (Sq^{2^{f}} u_1^{2^{f}} + Sq^{2^{f+1}} u_2 ^{2^{f}} ) \monom_{\mathrm{full}}, 
\]
where $u_1, u_2 \in \Pbar_3$.  This expression has the same form as in 
the non-full case. 
\end{enumerate}

The indexing was chosen so that $\full (\monom_{\rmfull}) = f$ in both cases.
To establish the inductive steps, it is sufficient to show, for 
$u_1, u_2 \in \Pbar_3$ and $\full (\monom_\rmfull) =f$, that 
\[
 (Sq^{2^{f}} u_1^{2^{f}} + Sq^{2^{f+1}} u_2 ^{2^{f}} ) \monom_\rmfull  
 \in 
 \image (Sq^1, \ldots , Sq^{2^{f+1}}) \subset P_3 \otimes_{D_3} R_3 F(n).
\]
(Note that the appropriate subalgebra of $\cala$ is $\cala (f+1)$ in both 
cases, 
by the choice of indexing.)

Hence  consider elements of the form 
\[
(Sq^{\delta} u)^{2^f} \monom_\rmfull =   Sq^{2^{f -1  +\delta}} (u^{2^f}) 
\monom_\rmfull
\]
for $u \in \Pbar_3$ and $\delta \in \{1, 2 \}$ and $\full (\monom_\rmfull) =f$.

If $\lh (\monom_\rmfull) \not \equiv 2^f -1 \mod 2^f$, then Lemma 
\ref{lem:exclude}, 
using $Sq^{2^{i+2}}$ for the appropriate $i$,  $0 \leq i < f$, 
allows reduction to terms earlier in the inductive scheme, since 
\begin{enumerate}
 \item 
 $i+2 < f+2$,  so that $i+2 \leq f+1$ and hence  $Sq^{2^{i+2}} \in \cala 
(f+1)$; 
 \item 
 the terms $\mathfrak{s}_l v_l^{2^f}$ (in the notation of Lemma 
\ref{lem:exclude}) which occur have fullness $\full(\mathfrak{s}_l) < f$, hence 
are 
treated by the inductive hypothesis.
\end{enumerate}

Finally, if $\lh (\monom_\rmfull) \equiv 2^f -1 \mod 2^f$, consider 
\[
  Sq^{2^{f -1  +\delta}} (u^{2^f}  \monom_\rmfull ) 
\]
using the Cartan formula and Lemma \ref{lem:parity_case} to understand the 
terms 
which occur from applying a Steenrod square 
to $\monom_\rmfull$, which are of the form $Sq^j \monom_\rmfull$ with $j \leq 
2^{f+1}$. (In fact, only $Sq^j \monom_\rmfull$ with $j \in \{2^f , 2^{f+1} \}$ 
have to be considered, but this precision is not required.) 
Note that the operation $  Sq^{2^{f -1  +\delta}}$ lies in $\cala (f+1)$.

By the final statement of Lemma \ref{lem:parity_case},  the terms which arise 
either have 
fullness $<f$ or can be treated as above by using Lemma \ref{lem:exclude}. 

This completes the proof of the  inductive steps.
\end{proof}

\appendix
\section{The composite of the Lannes-Zarati morphism and the Singer algebraic  transfer}
\label{appendix}

Let  $e \in \ext^1_{\cala} (\Sigma^{-1} \field, P_1)$, where $P_1 =\field [x]$, be the  non-trivial extension class that is represented by the short exact sequence of $\cala$-modules
\[
0 \longrightarrow P_1 \longrightarrow \widehat{P}_1 
\longrightarrow\Sigma^{-1}\mathbb{F}_2 \longrightarrow 0, 
\] 
in which $\widehat{P}_1$ denotes the submodule of elements of degree 
$\geq  -1$ in the algebra $\field  [x^{\pm 1}]$ equipped with the structure of 
$\cala$-algebra extending that on $P_1$. For a positive integer $s$, the class $e^s \in 
\ext^s_{\cala} (\Sigma^{-s} \field, P_s)$ is  given by forming the $s$-fold tensor product, using the isomorphism $P_s \cong  P_1^{\otimes s}$.  

Recall that the destabilization functor $\cald$ from $\cala$-modules to 
unstable modules gives $\cald N$, the 
largest unstable quotient of the  $\cala$-module $N$ \cite{P_viasm}. 
There is a natural transformation $\cald N  \to \field \otimes_{\cala} N$ 
 of functors from $\cala$-modules to $\cala$-modules, where $ \field 
\otimes_{\cala} N$ has 
trivial $\cala$-module structure. This is obtained 
by applying $\cald$ to the quotient 
$N \to \field \otimes_{\cala} N$ and then composing with the canonical  
inclusion:
\[
\cald N  \to \cald(\field \otimes_{\cala} N) 
= 
 (\field \otimes_{\cala} 
N)^{\geq 0} 
\stackrel{\subset}{\longrightarrow} 
 \field \otimes_{\cala} N.     
\]
This  passes to  derived functors to give    
\begin{eqnarray}
\label{eqn:nat_D_Tor}
 \cald_s N \rightarrow \tor_s^{\cala} (\field, N).
\end{eqnarray}

The cap product with  $e^s$ induces 
$\cap e^s : \cald_s (\Sigma^{-s} N) \rightarrow \cald (P_s \otimes N)$ for any 
$\cala$-module $N$ and hence,  for $M$ an unstable module,
\[
 \alpha^M_s : \cald_s (\Sigma^{-s} M) \rightarrow P_s \otimes M
.\]

Lannes and Zarati \cite{LZ} used this to relate the functor $\cald_s$ to the 
Singer functors $R_s$ (see Section \ref{sect:background}). Namely, by \cite[Théorème 2.5]{LZ}, for an unstable module $M$,  $\alpha^{\Sigma M}_s$ induces an isomorphism 
\begin{eqnarray}
\label{eqn:alpha_LZ}
 \cald_s (\Sigma ^{1-s } M)
 \stackrel{\cong}{\longrightarrow} 
 \Sigma R_s M .
\end{eqnarray}
Hence, there is a   
natural morphism of $\cala$-modules 
$\Sigma  R_s M \rightarrow \tor_s^\cala (\field, 
\Sigma^{1-s} M) \cong \Sigma \tor_s^\cala (\field, 
\Sigma^{-s} M) $ and thus 
$
 \field \otimes_\cala R_s M 
 \rightarrow 
 \tor_s^\cala (\field, 
\Sigma^{-s} M).
$ 
The linear dual of this map is the Lannes-Zarati homomorphism, as explained in the Introduction.

For an $\cala$-module $N$, Singer's algebraic transfer \cite{S} is the dual of the map
\[
 \psi_s : \tor_s^\cala (\field, \Sigma^{-s} N)
\rightarrow 
\field \otimes_\cala (P_s \otimes N) 
\]
induced by the cap product with $e^s$. Hence this fits into the commutative diagram:
\begin{eqnarray*}
\label{eqn:commutative_cap}
 \xymatrix{
 \cald_s (\Sigma^{-s} N)
 \ar[r]^{\cap e^s}
 \ar[d]
 &
\cald( P_s \otimes N) 
 \ar[d]
 \\
 \tor_s^\cala (\field, \Sigma^{-s} N)
 \ar[r]_{\psi_s}
 &
 \field \otimes_\cala (P_s \otimes N)
}
\end{eqnarray*}
in which the vertical morphisms are the natural transformations (\ref{eqn:nat_D_Tor}). 
  
\begin{prop}
\label{prop:identify_LZ_Singer}
For $M$ an unstable module and $s \in \nat$, the dual of the composition of the Singer algebraic transfer with the Lannes-Zarati morphism 
is the composite 
\[
  R_s M \hookrightarrow P_s \otimes M \twoheadrightarrow \field \otimes_\cala 
(P_s \otimes M)
\]
of the canonical inclusion with the projection to $\cala$-indecomposables. 
\end{prop}  
  
\begin{proof}
This follows from the previous discussion. Namely, take $N= \Sigma M$ and consider the commutative diagram (\ref{eqn:commutative_cap}). Together with the  isomorphism $\alpha_s^{\Sigma M}$ of equation (\ref{eqn:alpha_LZ}) this yields the result after desuspension.
\end{proof}

\smallskip
\noindent
{\bf Acknowledgement.} 
This research was carried out when the first  author was a CNRS invited 
researcher at the LAREMA, Angers in the autumn of 2014. The public manuscript 
was 
prepared in the winter of 2015-16, when the first 
author visited  the Vietnam Institute for Advanced Study in Mathematics 
(VIASM), Hanoi; he would like to express his warmest thanks to the CNRS  and to
the VIASM for hospitality and for the wonderful working conditions. 

The first author was partially funded by the National Foundation for 
Science and Technology Development 
(NAFOSTED) of Vietnam under grant number 101.04-2014.19. The second  author was partially supported by the project {\em Nouvelle 
Équipe},  convention No. 2013-10203/10204 between the Région des Pays de la 
Loire and the Université d'Angers.

The authors are grateful to a referee for their careful reading of the paper. 

\providecommand{\bysame}{\leavevmode\hbox to3em{\hrulefill}\thinspace}
\providecommand{\MR}{\relax\ifhmode\unskip\space\fi MR }
\providecommand{\MRhref}[2]{%
  \href{http://www.ams.org/mathscinet-getitem?mr=#1}{#2}
}
\providecommand{\href}[2]{#2}

\end{document}